\documentclass[a4paper,reqno]{amsart}
\usepackage{amssymb}
\usepackage{pstricks}
\begin{document}
\title[Finding generators and relations]{Finding generators and relations for \\ groups acting on the hyperbolic ball}

\author[Donald I. Cartwright \\ Tim Steger]{Donald I. Cartwright \\ Tim Steger}

\addtolength{\baselineskip}{-0.5pt}
\numberwithin{equation}{section}
\newtheorem{theorem}{Theorem}[section]
\newtheorem{proposition}{Proposition}[section]
\newtheorem{lemma}{Lemma}[section]
\newtheorem{corollary}{Corollary}[section]
\theoremstyle{definition}
\newtheorem{definition}{Definition}
\theoremstyle{remark}
\newtheorem{remark}{Remark}
\newtheorem{example}{Example}
\newcommand{\C}{{\mathbb C}}
\newcommand{\F}{{\mathbb F}}
\newcommand{\R}{{\mathbb R}}
\newcommand{\Q}{{\mathbb Q}}
\newcommand{\Z}{{\mathbb Z}}
\newcommand{\cC}{\mathcal{C}}
\newcommand{\cF}{\mathcal{F}}
\newcommand{\cG}{\mathcal{G}}
\newcommand{\vol}{{\mathrm{vol}}}
\begin{abstract}In order to enumerate the fake projective planes, as announced in~\cite{CS},
we found explicit generators and a presentation for each maximal arithmetic subgroup
$\bar\Gamma$ of~$PU(2,1)$ for which the (appropriately normalized) covolume equals~$1/N$ 
for some integer~$N\ge1$. Prasad and Yeung \cite{PY1,PY2} had given a list of all 
such $\bar\Gamma$ (up to equivalence).

The generators were found by a computer search which uses the natural action of $PU(2,1)$ on
the unit ball $B(\C^2)$ in~$\C^2$. Our main results here give criteria
which ensure that the computer search has found sufficiently many elements 
of~$\bar\Gamma$ to generate $\bar\Gamma$, and describes a family of relations amongst
the generating set sufficient to give a presentation of~$\bar\Gamma$.

We give an example illustrating details of how this was done in the case of a particular~$\bar\Gamma$
(for which $N=864$). While there are no fake projective planes in this case, we exhibit
a torsion-free subgroup~$\Pi$ of index~$N$ in~$\bar\Gamma$, and give some properties of the surface~$\Pi\backslash B(\C^2)$.
\end{abstract}
\maketitle
\begin{section}{Introduction}
Suppose that $P$ is a fake projective plane. Its Euler-characteristic~$\chi(P)$ is~3. The fundamental 
group $\Pi=\pi_1(P)$ embeds as a cocompact arithmetic lattice subgroup of~$PU(2,1)$, and so 
acts on the unit ball $B(\C^2)=\{(z_1,z_2)\in\C^2:|z_1|^2+|z_2|^2<1\}$ in~$\C^2$, endowed with the
hyperbolic metric. Let $\cF$ be a fundamental domain for this
action. There is a normalization of the hyperbolic volume~$\vol$ on~$B(\C^2)$ and of the Haar measure~$\mu$
on~$PU(2,1)$ so that $\chi(P)=3\vol(\cF)=3\mu(PU(2,1)/\Pi)$. So $\mu(PU(2,1)/\Pi)=1$. Let 
$\bar\Gamma\le PU(2,1)$ be maximal arithmetic, with $\Pi\le\bar\Gamma$. Then $\mu(PU(2,1)/\bar\Gamma)=1/N$ 
and $[\bar\Gamma:\Pi]=N$ for some integer~$N\ge1$. 

The fundamental group $\Pi$ of a fake projective plane must also be torsion-free and $\Pi/[\Pi,\Pi]$ 
must be finite (see \cite{CS} for example). 

As announced in~\cite{CS}, we have found all groups~$\Pi$ with these properties, up to isomorphism.
Our method was to find explicit generators and an explicit presentation for each~$\bar\Gamma$,
so that the question of finding all subgroups~$\Pi$ of~$\bar\Gamma$ with index~$N$ and
the additional required properties, as just mentioned, can be studied.

In Section~2, we give results about finding generators and relations for groups~$\Gamma$
acting on quite general metric spaces~$X$. The main theorem gives simple conditions which ensure that a 
set~$S$ of elements of~$\Gamma$, all of which move a base point~$0$ by at most a certain distance~$r_1$,
are all the elements of~$\Gamma$ with this property. 

In Section~3 we specialize to the case $X=B(\C^2)$, and treat in detail a particular group~$\Gamma$.
This~$\Gamma$ is one of the maximal arithmetic subgroups $\bar\Gamma\le PU(2,1)$ listed in~\cite{PY1,PY2} 
with covolume of the form~$1/N$, $N$ an integer. In this case~$N=864$. Consider the action of~$\Gamma$
on~$B(\C^2)$, and let 0 denote the origin in~$B(\C^2)$. Two elements,
denoted $u$ and~$v$, generate the stabilizer~$K$ of~0 in~$\Gamma$. Another element~$b$ of~$\Gamma$ was found by
a computer search looking for elements $g\in\Gamma$ for which $d(g.0,0)$ is small.
We use the results of Section~2 to show that the computer search did not miss any such~$g$,
and to get a simple presentation of~$\Gamma$. It turns out that this~$\Gamma$ is one of the
Deligne-Mostow groups (see Parker~\cite{Parker}).

We apply this presentation of~$\Gamma$ to exhibit a torsion-free subgroup~$\Pi$ of index~864. The
abelianization $\Pi/[\Pi,\Pi]$ is~$\Z^2$, and so $\Pi$ is not the fundamental group of a fake-projective
plane. However the ball quotient $\Pi\backslash B(\C^2)$ is a compact complex surface with interesting
properties, some of which we describe. By a lengthy computer search not discussed here, we showed
that any torsion-free subgroup of index~864 in~$\Gamma$ is conjugate to~$\Pi$, and so no fake projective
planes arise in this context.

In Section~4, we calculate the value of~$r_0$ for the example of the previous section.
\end{section}
\begin{section}{General results}\label{sec:generalresults}
Let $\Gamma$ be a group acting by isometries on a simply-connected geodesic metric space~$X$.
Let $S\subset\Gamma$ be a finite symmetric generating set for~$\Gamma$.
Fix a point $0\in X$, and suppose that $d(0,x)$ is bounded on the set 
\begin{equation}\label{eq:fsdefinition}
\cF_S=\{x\in X:d(0,x)\le d(g.0,x)\ \text{for all}\ g\in S\}.
\end{equation}
Define
\begin{displaymath}
r_0=\sup\{d(0,x):x\in\cF_S\}.
\end{displaymath}
\begin{theorem}\label{thm:mainresult}Suppose that there is a number $r_1>2r_0$ such that
\begin{itemize}
\item[(a)] if $g\in S$, then $d(g.0,0)\le r_1$,
\item[(b)] if $g,g'\in S$ and $d((gg').0,0)\le r_1$, then $gg'\in S$.
\end{itemize}
Then $S=\{g\in\Gamma:d(g.0,0)\le r_1\}$.
\end{theorem}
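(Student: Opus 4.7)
My plan is to prove the two inclusions of $S = \{g \in \Gamma : d(g.0, 0) \le r_1\}$. The inclusion ``$\subseteq$'' is immediate from hypothesis~(a). The substance is the reverse inclusion, and the strategy is a midpoint-of-a-geodesic argument combined with induction on displacement, made to work by the strict inequality $r_1 > 2 r_0$.

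The geometric engine is the claim that the $\Gamma$-translates $\{g.\cF_S : g \in \Gamma\}$ cover $X$. I would verify this by showing that for every $x \in X$ the infimum of $d(g.0, x)$ over $g \in \Gamma$ is attained (the bound $r_0 < \infty$ together with the generating property of $S$ and the simple-connectedness of $X$ should suffice for this); if $g_0 \in \Gamma$ realizes the infimum, then $g_0^{-1}.x \in \cF_S$, so $x \in g_0.\cF_S$.

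Given $h \in \Gamma$ with $L := d(h.0, 0) \le r_1$, let $m$ be the midpoint of a geodesic from $0$ to $h.0$, and pick $g \in \Gamma$ with $m \in g.\cF_S$, so that $d(g.0, m) \le r_0$. The triangle inequality then yields
\[
d(g.0, 0) \le r_0 + \tfrac{L}{2} \qquad\text{and}\qquad d(g^{-1}h.0, 0) = d(g.0, h.0) \le r_0 + \tfrac{L}{2},
\]
each of which is strictly less than $r_1$ since $r_0 < r_1/2$. So if we can verify inductively that both $g$ and $g^{-1} h$ lie in $S$, hypothesis~(b) applied to the factorization $h = g \cdot (g^{-1} h)$ (whose product satisfies $d(h.0,0) \le r_1$ by assumption) delivers $h \in S$.

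This structures the proof as an induction on $L$. When $L > 2 r_0$ the bound $r_0 + L/2$ is strictly less than $L$, so the midpoint argument genuinely lowers the induction parameter and both factors fall under the inductive hypothesis. The main obstacle I anticipate is the regime $L \le 2 r_0$, where halving no longer strictly reduces displacement. Here I would handle things by subdividing the geodesic into many short arcs, tracking the sequence of labeling elements $g_i \in \Gamma$ with $\gamma(t_i) \in g_i.\cF_S$, and exploiting the resulting factorization of $h$ into pieces each of displacement only slightly above $2 r_0$; alternatively one can invoke simple-connectedness of $X$ to lift the geodesic to a combinatorial path on a Cayley-graph-like $1$-skeleton built from $S$. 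This small-displacement case is the delicate step that forces the strict inequality $r_1 > 2 r_0$ rather than merely $r_1 \ge 2 r_0$.
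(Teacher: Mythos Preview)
Your induction has no base case and does not terminate. From $L > 2r_0$ the midpoint step yields factors with displacement at most $r_0 + L/2$, which is still strictly greater than $2r_0$; iterating, the bounds decrease toward $2r_0$ but never cross it, so you never reach a regime where membership in $S$ is already known. Your suggested fix for $L \le 2r_0$---subdivide the geodesic and write $h$ as a product of short pieces $g_{i-1}^{-1}g_i$---only shows each piece has displacement $< r_1$; it does not show each piece lies in $S$, which is exactly the statement you are trying to prove. Even the stabilizer of $0$ (the case $L = 0$) is not known a priori to lie in $S$; that is a consequence of the theorem, not an input. A second gap: your covering claim asks that $\inf_{g \in \Gamma} d(g.0, x)$ be attained, but $\Gamma$ is not assumed discrete---indeed discreteness is a \emph{corollary} of the theorem in the paper.

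The paper proceeds quite differently. Finiteness of $S$ substitutes for discreteness: minimizing over the finite set $S$ (Lemmas~\ref{lem:tworzero} and~\ref{lem:remepsilon}) shows first that $S_0 = \{g \in S : d(g.0,0) \le 2r_0\}$ generates $\Gamma$, and second that any point within $r_0 + \epsilon$ of $0$ can be translated into $\cF_S$ by an element of $S$ itself. Given $g$ with $d(g.0,0) \le r_1$, write $g = y_1\cdots y_n$ with $y_i \in S_0$, map the boundary of a subdivided Euclidean triangle into $X$ (two sides carrying the word, the base carrying the geodesic $[0,g.0]$), and invoke simple-connectedness to fill the map across the triangle. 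After refining, each edge $e$ gets a label $y(e) \in \Gamma$, and hypothesis~(b) is applied one subtriangle at a time (Lemma~\ref{lem:thirdedge}) to propagate $y(e) \in S$ from the top down to the middle base edge, where $y(e) = g$. It is the two-dimensional filling, not a one-dimensional descent, that makes the argument finite; this is where simple-connectedness enters essentially, whereas in your outline it appears only as a vague fallback.
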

\begin{corollary}\label{cor:funddomain}For $\Gamma,S$ as in Theorem~\ref{thm:mainresult}, $\cF_S$ is equal to the Dirichlet funda\-ment\-al domain
$\cF=\{x\in X:d(0,x)\le d(g.0,x)\ \text{for all}\ g\in\Gamma\}$
of~$\Gamma$ centered at~$0$.
\end{corollary}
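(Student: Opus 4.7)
The plan is to prove the two inclusions $\cF\subseteq\cF_S$ and $\cF_S\subseteq\cF$ separately. The first inclusion is essentially immediate: any $x\in\cF$ satisfies $d(0,x)\le d(g.0,x)$ for \emph{every} $g\in\Gamma$, and in particular for every $g\in S\subseteq\Gamma$, so $x\in\cF_S$. No use of Theorem~\ref{thm:mainresult} is needed here.

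For the reverse inclusion, which is where the work lies, I would fix $x\in\cF_S$ and an arbitrary $g\in\Gamma$, and show $d(0,x)\le d(g.0,x)$ by a case split on the size of $d(g.0,0)$ relative to $r_1$. Note first that from the definition of $r_0$ as a supremum we have $d(0,x)\le r_0$.

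\emph{Case 1:} $d(g.0,0)\le r_1$. Here I would invoke Theorem~\ref{thm:mainresult} directly: its conclusion is that $S$ equals $\{g\in\Gamma:d(g.0,0)\le r_1\}$, so $g\in S$. Then the defining inequality of $\cF_S$ in~\eqref{eq:fsdefinition} gives $d(0,x)\le d(g.0,x)$.

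\emph{Case 2:} $d(g.0,0)>r_1$. Since $r_1>2r_0$, the triangle inequality gives
\begin{displaymath}
d(g.0,x)\ge d(g.0,0)-d(0,x)>2r_0-r_0=r_0\ge d(0,x),
\end{displaymath}
again as desired. Combining the two cases shows $x\in\cF$, completing the proof. The main obstacle is really Theorem~\ref{thm:mainresult} itself, which is already assumed; granted that, the corollary is a short triangle-inequality argument, and the role of the hypothesis $r_1>2r_0$ is precisely to ensure that Case~2 is nonvacuous and handled by the triangle inequality.
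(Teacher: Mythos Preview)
Your proof is correct and follows essentially the same approach as the paper's: both use that $x\in\cF_S$ implies $d(0,x)\le r_0$, apply Theorem~\ref{thm:mainresult} to conclude that any $g$ with $d(g.0,0)\le r_1$ lies in~$S$, and handle remaining~$g$ via the triangle inequality. The only cosmetic difference is that the paper argues by contradiction (assuming $d(g.0,x)<d(0,x)$ and showing this forces $d(g.0,0)\le 2r_0\le r_1$, hence $g\in S$), whereas you give a direct case split on $d(g.0,0)$.
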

\begin{proof}Clearly $\cF\subset\cF_S$. If $x\in\cF_S\setminus\cF$,
pick a $g\in\Gamma$ so that $d(g.0,x)<d(0,x)$. Now $d(0,x)\le r_0$
because $x\in\cF_S$, and so $d(0,g.0)\le d(0,x)+d(x,g.0)\le2d(0,x)\le2r_0\le r_1$, 
so that $g\in S$, by Theorem~\ref{thm:mainresult}. But then $d(0,x)\le d(g.0,x)$, a contradiction.
\end{proof}
\begin{corollary}Assume $\Gamma,S$ are as in Theorem~\ref{thm:mainresult},
and that the action of each\break
$g\in\Gamma\setminus\{1\}$ is nontrivial,
so that $\Gamma$ may be regarded as a subgroup of the group~$\cC(X)$ of 
continuous maps $X\to X$. Then $\Gamma$ is discrete for the compact open topology.
\end{corollary}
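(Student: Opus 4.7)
The plan is to exhibit an explicit open neighborhood of the identity in the compact-open topology on $\cC(X)$ whose intersection with $\Gamma$ is $\{1\}$. Recall that a basic open neighborhood of $1\in\cC(X)$ in this topology can be taken of the form
\begin{equation*}
W(K_0,\varepsilon)=\{f\in\cC(X):d(f(x),x)<\varepsilon\ \text{for all}\ x\in K_0\},
\end{equation*}
where $K_0\subset X$ is compact and $\varepsilon>0$. I will build such a $W$ in two stages: first cut $\Gamma$ down to the stabilizer of~$0$, then separate the nonidentity stabilizer elements from~$1$ using the faithfulness hypothesis.

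For the first stage, apply Theorem~\ref{thm:mainresult} to note that the stabilizer $K=\{g\in\Gamma:g.0=0\}$ satisfies $K\subset S$, so $K$ is finite, and more generally the set $\{g\in\Gamma:d(g.0,0)\le r_1\}=S$ is finite. Let
\begin{equation*}
r_2=\min\bigl\{d(g.0,0):g\in S\setminus K\bigr\}>0
\end{equation*}
(interpret the minimum as $r_1$ if $S=K$). Taking $K_0=\{0\}$ and $\varepsilon=r_2$, the basic open set $W(\{0\},r_2)$ contains no $g\in\Gamma$ with $0<d(g.0,0)<r_2$: such a $g$ would lie in $S$ by Theorem~\ref{thm:mainresult} (since $d(g.0,0)<r_2\le r_1$), contradicting the definition of~$r_2$. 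Hence $W(\{0\},r_2)\cap\Gamma\subset K$.

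For the second stage, since each $g\in K\setminus\{1\}$ acts nontrivially, pick $x_g\in X$ with $g.x_g\ne x_g$, and set
\begin{equation*}
K_0=\{0\}\cup\{x_g:g\in K\setminus\{1\}\},\qquad \varepsilon=\min\Bigl(r_2,\ \tfrac13\min_{g\in K\setminus\{1\}}d(g.x_g,x_g)\Bigr).
\end{equation*}
The set $K_0$ is finite hence compact, $\varepsilon>0$ since $K$ is finite, and $W(K_0,\varepsilon)$ is a compact-open neighborhood of~$1$. Any $g\in W(K_0,\varepsilon)\cap\Gamma$ lies in $K$ by the first stage; if additionally $g\ne1$, then $d(g.x_g,x_g)<\varepsilon\le\tfrac13 d(g.x_g,x_g)$, a contradiction. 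Thus $W(K_0,\varepsilon)\cap\Gamma=\{1\}$, proving discreteness.

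The argument is essentially a clean bookkeeping exercise on top of Theorem~\ref{thm:mainresult}; the only mildly delicate point is remembering that elements of the stabilizer $K$ need not move~$0$, so they cannot be separated from~$1$ by controlling behavior at~$0$ alone, and one must enlarge the compact set $K_0$ using the faithfulness assumption.
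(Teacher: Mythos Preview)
Your proof is correct and follows essentially the same approach as the paper: use Theorem~\ref{thm:mainresult} to show that any $g\in\Gamma$ moving $0$ by less than some fixed amount lies in a known finite set, then use faithfulness of the action to pick witness points separating the remaining nonidentity elements from~$1$. The only cosmetic differences are that you first cut down to the stabilizer $K$ before handling the finite residue (the paper works directly with all of $S\setminus\{1\}$), and you package everything into a single uniform $\varepsilon$, whereas the paper intersects per-element neighborhoods $V_g=\{f:f(x_g)\in B_{r_g}(x_g)\}$ over $g\in S\setminus\{1\}$ together with $V_1=\{f:f(0)\in B_{r_1}(0)\}$.
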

\begin{proof}Let $V_1=\{f\in\cC(X):f(0)\in B_{r_1}(0)\}$, where 
$B_r(x)=\{y\in X:d(x,y)<r\}$. Theorem~\ref{thm:mainresult} shows that $\Gamma\cap V_1\subset S$.
For each $g\in S\setminus\{1\}$, choose $x_g\in X$ so that $g.x_g\ne x_g$, let 
$r_g=d(g.x_g,x_g)$, and let $V_g=\{f\in\cC(X):f(x_g)\in B_{r_g}(x_g)\}$.
Then $g\not\in V_g$. Hence the intersection~$V$ of the sets $V_g$, $g\in S$,
is an open neighborhood of~$1$ in~$\cC(X)$ such that $\Gamma\cap V=\{1\}$.
\end{proof}
Before starting the proof of Theorem~\ref{thm:mainresult}, we need some
lemmas, which have the same hypotheses as Theorem~\ref{thm:mainresult}.

\begin{lemma}\label{lem:tworzero}
The group $\Gamma$ is generated by $S_0=\{g\in S : d(g.0,0)\le2r_0\}$.
\end{lemma}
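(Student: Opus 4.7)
The plan is to use strong induction on the value $d(s.0,0)$, taken over the finite set~$S$, to show that every $s\in S$ lies in $\langle S_0\rangle$. Since $S$ generates~$\Gamma$, this immediately yields the lemma. The base case is trivial: if $d(s.0,0)\le 2r_0$ then $s\in S_0\subset\langle S_0\rangle$.

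For the inductive step, take $s\in S$ with $D:=d(s.0,0)>2r_0$. Because $X$ is a geodesic metric space, I would fix a geodesic from $0$ to $s.0$ and let $m$ be its midpoint, so that $d(0,m)=d(m,s.0)=D/2>r_0$. Since $d(0,m)>r_0$, the point $m$ lies outside $\cF_S$, and the definition~\eqref{eq:fsdefinition} produces an $h\in S$ with $d(h.0,m)<D/2$. Two applications of the triangle inequality then yield
\[
d(h.0,0)\le d(h.0,m)+d(m,0)<D, \qquad d((h^{-1}s).0,0)=d(s.0,h.0)\le d(s.0,m)+d(m,h.0)<D.
\]

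The crucial observation is that $h^{-1}s$ again lies in~$S$: we have $h^{-1}\in S$ because $S$ is symmetric, and $d((h^{-1}s).0,0)<D\le r_1$ by hypothesis~(a), so hypothesis~(b) applies with $g=h^{-1}$, $g'=s$ and forces $h^{-1}s\in S$. Thus both factors of the decomposition $s=h\cdot(h^{-1}s)$ lie in~$S$ and move $0$ by strictly less than~$D$. By the inductive hypothesis both belong to $\langle S_0\rangle$, and therefore so does~$s$.

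The main obstacle is arranging a decomposition $s=s's''$ in which \emph{both} factors stay inside the finite set~$S$; without this one cannot close the induction, since a priori $d(g.0,0)$ ranges over a continuous set of values. This is exactly what hypothesis~(b) is designed to guarantee, working together with the midpoint construction and the bound $D\le r_1$ supplied by~(a); the finiteness of~$S$ then makes the strong induction well-founded.
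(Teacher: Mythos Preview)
Your proof is correct. Both arguments proceed by minimality/strong induction on $d(s.0,0)$ over the finite set $S$, choose a point on the geodesic $[0,s.0]$ lying outside $\cF_S$, extract $h\in S$ closer to that point than $0$ is, and invoke hypothesis~(b) to keep $h^{-1}s$ in~$S$. The difference is in \emph{which} point is chosen. You take the midpoint $m$, obtaining only $d(h.0,0)<D$, so the inductive hypothesis must be applied to both factors $h$ and $h^{-1}s$. The paper instead takes a point $\xi'$ just beyond the last point of $\cF_S$ on the geodesic and introduces a gap $\delta>0$ with $d(g.0,0)\ge 2r_0+\delta$ for all $g\in S\setminus S_0$; this yields $d(h.0,0)<2r_0+\delta$, hence $h\in S_0$ outright, and the minimality argument is needed only for $h^{-1}g$. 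Your midpoint version is tidier and avoids the $\delta$-bookkeeping; the paper's version buys the extra information that $h$ may always be taken in $S_0$, and its constant $\delta$ is reused in the remark following Lemma~\ref{lem:remepsilon}.
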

\begin{proof}As $S$ is finite, there is a $\delta>0$ 
so that $d(g.0,0)\ge 2r_0+\delta$ for all $g\in S\setminus S_0$. Let $\Gamma_0$ denote 
the subgroup of~$\Gamma$ generated by~$S_0$, and assume that $\Gamma_0\subsetneqq\Gamma$. 
Since $\Gamma$ is generated by~$S$, there are $g\in S\setminus\Gamma_0$, and we
choose such a $g$ with $d(g.0,0)$ as small as possible.
Then $d(g.0,0)>2r_0$, since otherwise $g\in S_0\subset\Gamma_0$.
In particular, $g.0\not\in\cF_S$. Since $0\in\cF_S$, there is a last point~$\xi$ belonging to~$\cF_S$
on the geodesic from~$0$ to~$g.0$. Choose any point $\xi'$ on that geodesic
which is outside~$\cF_S$ but satisfies $d(\xi,\xi')<\delta/2$.
As $\xi'\not\in\cF_S$, there is an $h\in S$ such that $d(h.0,\xi')<d(0,\xi')$. 
Hence
\begin{displaymath}
d(h.0,g.0)\le d(h.0,\xi')+d(\xi',g.0)<d(0,\xi')+d(\xi',g.0)=d(0,g.0),
\end{displaymath}
so that $d((h^{-1}g).0,0)<d(g.0,0)$. So $h^{-1}g$ cannot be in $S\setminus\Gamma_0$, by choice of~$g$.
Also,
\begin{displaymath}
d(h.0,0)\le d(h.0,\xi')+d(\xi',0)<2d(0,\xi')\le2(d(0,\xi)+d(\xi,\xi'))<2r_0+\delta.
\end{displaymath}
Hence $h\in S_0$, by definition of~$\delta$. Now $h^{-1}g\in S$ by hypothesis~(b) above, 
since $h^{-1},g\in S$ and $d((h^{-1}g).0,0)<d(0,g.0)\le r_1$. So $h^{-1}g$ must be in~$\Gamma_0$.
But then $g=h(h^{-1}g)\in\Gamma_0$, contradicting our assumption.
\end{proof}

\begin{lemma}\label{lem:remepsilon}If $x\in X$ and if $d(0,x)\le r_0+\epsilon$,
where $0<\epsilon\le(r_1-2r_0)/2$, then there exists $g\in S$ such that $g.x\in\cF_S$,
and in particular, $d(0,g.x)\le r_0$.
\end{lemma}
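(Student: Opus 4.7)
The plan is to build $g$ iteratively, at each step either stopping (if the current point is already in $\cF_S$) or applying an element of $S$ that strictly decreases the distance from~$0$. Such an element always exists outside $\cF_S$ by the very definition~\eqref{eq:fsdefinition}. The only nontrivial point is that the accumulating product must remain inside $S$, and this is exactly what the restriction $\epsilon \le (r_1 - 2r_0)/2$ is tailored to guarantee through hypothesis~(b).

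Concretely, I would set $y_0 = x$, $g_0 = 1$, and, while $y_n \notin \cF_S$, pick $h_{n+1} \in S$ with $d(h_{n+1}.0, y_n) < d(0, y_n)$, then define $y_{n+1} = h_{n+1}^{-1}.y_n$ and $g_{n+1} = h_{n+1}^{-1} g_n$, so that $y_n = g_n.x$ at every stage. The base case $g_0 = 1 \in S$ follows by applying~(b) to any pair $s, s^{-1} \in S$ (the product is~$1$, with $d(1.0,0) = 0 \le r_1$). For the inductive step I would estimate
\[
d(g_{n+1}.0, 0) \le d(g_{n+1}.0, g_{n+1}.x) + d(y_{n+1}, 0) = d(0, x) + d(0, y_{n+1}) \le 2(r_0 + \epsilon) \le r_1,
\]
the middle inequality using that the sequence $d(0, y_n)$ is strictly decreasing from $d(0, x) \le r_0 + \epsilon$, and the last using the bound on~$\epsilon$. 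Since $h_{n+1}^{-1}, g_n \in S$ (recall $S$ is symmetric), hypothesis~(b) then forces $g_{n+1} = h_{n+1}^{-1} g_n \in S$.

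Termination is automatic: the distances $d(0, y_n)$ strictly decrease, so the $y_n$ are pairwise distinct, hence so are the $g_n$; since $S$ is finite, the construction must stop at some~$n$ with $y_n \in \cF_S$. Setting $g = g_n$ gives the desired element of~$S$, and the bound $d(0, g.x) \le r_0$ is immediate from the definition of~$r_0$. The delicate ingredient---really the whole point of the lemma---is the inductive estimate that keeps $g_n$ inside~$S$; once that is in hand the rest is bookkeeping, and this estimate is precisely where the hypothesis $\epsilon \le (r_1 - 2r_0)/2$ is used.
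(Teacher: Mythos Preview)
Your proof is correct and uses the same core estimate as the paper: $d((h^{-1}g).0,0)\le d(0,x)+d(0,(h^{-1}g).x)\le 2(r_0+\epsilon)\le r_1$, together with hypothesis~(b), to keep the running product inside~$S$. The only difference is packaging: the paper short-circuits your iteration by choosing at the outset a $g\in S$ minimizing $d(0,g.x)$ (finiteness of~$S$), and then derives a contradiction from $g.x\notin\cF_S$ via exactly your inequality---so your descent and the paper's one-step minimization are two phrasings of the same argument.
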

\begin{proof}Since $S$ is finite, we can choose $g\in S$ so that $d(0,g.x)$ is minimal. If $g.x\in\cF_S$, there is nothing to prove, and
so assume that $g.x\not\in\cF_S$. There exists $h\in S$ such that $d(h.0,g.x)<d(0,g.x)$, and so
$d(0,(h^{-1}g).x)=d(h.0,g.x)<d(0,g.x)$. By the choice of~$g$, $h^{-1}g$ cannot be in~$S$, and also $d(0,g.x)\le d(0,x)$. So
\begin{displaymath}
\begin{aligned}
d(0,(h^{-1}g).0)\le d(0,(h^{-1}g).x)+d((h^{-1}g).x,(h^{-1}g).0)&=d(0,(h^{-1}g).x)+d(x,0)\\
&<d(0,g.x)+d(x,0)\\
&\le2d(0,x)\\
&\le 2r_0+2\epsilon\le r_1.
\end{aligned}
\end{displaymath}
But this implies that $h^{-1}g\in S$ by~(b) again, since $g,h\in S$. This 
is a contradiction, and so $g.x$ must be in $\cF_S$.
\end{proof}
\begin{remark}If in Lemma~\ref{lem:remepsilon} we also assume that $\epsilon<\delta/2$, where $\delta$ is as in
the proof of Lemma~\ref{lem:tworzero}, then the $g$ in the last lemma can be chosen
in~$S_0$. For then $d(0,g.0)\le d(0,g.x)+d(g.x,g.0)\le2d(0,x)\le2(r_0+\epsilon)<2r_0+\delta$.
\end{remark}

To prove Theorem~\ref{thm:mainresult}, we consider a $g\in\Gamma$ such that $d(g.0,0)\le r_1$. By Lemma~\ref{lem:tworzero},
we can write $g=y_1y_n\cdots y_n$, where $y_i\in S_0$ for each~$i$. Since $1\in S_0$, we
may suppose that $n$ is even, and write $n=2m$. Moreover, we can assume that $m$ is odd. 
Form an equilateral triangle~$\Delta$ in the Euclidean plane with horizontal base whose 
sides are each divided into~$m$ equal segments, marked off by 
vertices $v_0,v_1,\ldots,v_{3m-1}$. We use these to partition~$\Delta$ into $m^2$ subtriangles.
We illustrate in the case $m=3$: 
\begin{center}
\begin{pspicture}(0,-0.5)(3,3)
\psset{xunit=1,yunit=1}%
\psline[linecolor=black]{-}(0,0)(1.5,2.598)(3,0)(0,0)
\psline[linecolor=black]{-}(0.5,0.866)(2.5,0.866)
\psline[linecolor=black]{-}(1.0,1.732)(2.0,1.732)
\psline[linecolor=black]{-}(0.5,0.866)(1.0,0)(2.0,1.732)
\psline[linecolor=black]{-}(1.0,1.732)(2.0,0)(2.5,0.866)
\rput[c](0,0){$\scriptstyle\bullet$}
\rput[c](0.5,0.866){$\scriptstyle\bullet$}
\rput[c](1.0,1.732){$\scriptstyle\bullet$}
\rput[c](1.5,2.598){$\scriptstyle\bullet$}
\rput[c](2.0,1.732){$\scriptstyle\bullet$}
\rput[c](2.5,0.866){$\scriptstyle\bullet$}
\rput[c](3,0){$\scriptstyle\bullet$}
\rput[c](2,0){$\scriptstyle\bullet$}
\rput[c](1,0){$\scriptstyle\bullet$}
\rput[r](-0.1,0){$\scriptstyle{v_0}$}
\rput[r](0.4,0.866){$\scriptstyle{v_1}$}
\rput[r](0.9,1.732){$\scriptstyle{v_2}$}
\rput[b](1.5,2.698){$\scriptstyle{v_3}$}
\rput[l](2.1,1.732){$\scriptstyle{v_4}$}
\rput[l](2.6,0.866){$\scriptstyle{v_5}$}
\rput[l](3.1,0){$\scriptstyle{v_6}$}
\rput[t](2,-0.1){$\scriptstyle{v_7}$}
\rput[t](1,-0.1){$\scriptstyle{v_8}$}
\end{pspicture}
\end{center}
We define a continuous function $\varphi$ from the boundary of~$\Delta$ 
to~$X$ which maps $v_0$ to~$0$ and $v_i$ to $(y_1\cdots y_i).0$ for $i=1,\ldots,n$,
which for $i=1,\ldots,n$ maps the segment $[v_{i-1},v_i]$ to the geodesic from~$\varphi(v_{i-1})$
to~$\varphi(v_i)$, and which maps the bottom side of~$\Delta$ to the geodesic from 
$g.0=(y_1\cdots y_n).0$ to~0.

Because $X$ is simply-connected, we can extend $\varphi$ to a continuous map 
$\varphi:\Delta\to X$, where $\Delta$ here refers to the triangle and its interior.

Let $\epsilon>0$ be as in Lemma~\ref{lem:remepsilon}. For a sufficiently large integer~$r$, which
we choose to be odd, by partitioning each of the above subtriangles of~$\Delta$ into $r^2$ 
congruent subtriangles, we have $d(\varphi(t),\varphi(t'))<\epsilon$ for all 
$t,t'$ in the same (smaller) subtriangle.

We now wish to choose elements $x(v)\in\Gamma$ for each vertex~$v$ of each of the subtriangles,
so that 
\begin{itemize}
\item[(i)] $d(x(v).0,\varphi(v))\le r_0$ for each~$v$ not in the interior of the bottom side of~$\Delta$,
\item[(ii)] $d(x(v).0,\varphi(v))\le r_1/2$ for the $v\ne v_0,v_n$ on the bottom side of~$\Delta$.
\end{itemize}
We shall also define elements $y(e)$ for 
each directed edge~$e$ of each of the subtriangles in such a way that 
\begin{itemize}
\item[(iii)] $x(w)=x(v)y(e)$ if $e$ is the edge from~$v$ to~$w$.
\end{itemize} 
Note that the $y(e)$'s are completely determined by the $x(v)$'s, and the $x(v)$'s are
completely determined from the $y(e)$'s provided that one $x(v)$ is specified.

We first choose $x(v)$ for the original vertices~$v=v_i$ on the left
and right sides of~$\Delta$ by setting $x(v_0)=1$ and
$x(v_i)=y_1\cdots y_i$ for $i=1,\ldots,n$. Thus
$d(x(v).0,\varphi(v))=0\le r_0$ for these $v$'s. Now if
$w_0=v_{i-1},w_1,\ldots,w_r=v_i$ are the $r+1$ equally spaced vertices
of the edge from~$v_{i-1}$ to~$v_i$, we set $x(w_j)=x(v_{i-1})$ for
$1\le j<r/2$ and $x(w_j)=x(v_i)$ for $r/2<j\le r-1$. Then if $1\le
j<r/2$, we have

\begin{displaymath}
\begin{aligned}
d(x(w_j).0,\varphi(w_j))=d(x(v_{i-1}).0,\varphi(w_j))&=d(\varphi(v_{i-1}),\varphi(w_j))\\
&\le\frac{1}{2}d(\varphi(v_{i-1}),\varphi(v_i))\quad(*)\\
&=\frac{1}{2}d(0,y_i.0)\le r_0,\\
\end{aligned}
\end{displaymath}
where the inequality~($*$) holds as $\varphi$ on the segment from~$v_{i-1}$ to~$v_i$
is the geodesic from $\varphi(v_{i-1})$ to~$\varphi(v_i)$. In the same way,
$d(x(w_j).0,\varphi(w_j))\le r_0$ for $r/2<j\le r-1$.

Having chosen $x(v)$ for the $v$ on the left and right sides of~$\Delta$, we set $y(e)=x(v)^{-1}x(w)$
if $e$ is the edge from~$v$ to~$w$ on one of those sides. So of the $r$ edges in the segment
from~$v_{i-1}$ to~$v_i$, we have $y(e)=1$ except for the middle edge, for which $y(e)=y_i$.

For the vertices $v$ on the bottom side of~$\Delta$, we set $x(v)=1$ if $v$ is closer to~$v_0$ than
to~$v_n$, and set $x(v)=g=y_1\cdots y_n$ otherwise, and we set $y(e)=x(v)^{-1}x(w)$
if $e$ is the edge from~$v$ to~$w$. Since $rm$ is odd,
there is no middle vertex on the side, so there is no ambiguity in the definition of~$x(v)$,
but there is a middle edge~$e$, and $y(e)=g$ if $e$ is directed from left to right. For the
other edges $e'$ on the bottom side, $y(e')=1$. If $v$ is a vertex of the bottom side of~$\Delta$
closer to~$v_0$ than to~$v_n$, then
\begin{displaymath}
d(x(v).0,\varphi(v))=d(0,\varphi(v))\le\frac{1}{2}d(0,g.0)\le\frac{r_1}{2},
\end{displaymath}
since $\varphi$ on the bottom side is just the geodesic from~$0$ to~$g.0$. Similarly,
if $v$ is a vertex of the bottom side of~$\Delta$ closer to~$v_n$, then again
$d(x(v).0,\varphi(v))\le r_1/2$.

We now choose $x(v)$ for vertices which are not on the sides of~$\Delta$ as follows.
We successively define $x(v)$ as we move from left to right on a horizontal line.
Suppose that $v$ is a vertex for which $x(v)$ has not yet been chosen, but for which
$x(v')$ has been chosen for the vertex~$v'$ immediately to the left of~$v$. Now
$d(\varphi(v),\varphi(v'))\le\epsilon$ and $d(x(v').0,\varphi(v'))\le r_0$, so that
$d(\varphi(v),x(v').0)\le r_0+\epsilon$. By Lemma~\ref{lem:remepsilon}, applied to
$x=x(v')^{-1}\varphi(v)$, there is a $g\in S$ so that $d(gx(v')^{-1}\varphi(v),0)\le r_0$.
So we set $x(v)=x(v')g^{-1}$. If $e$ is the edge from $v'$ to~$v$, we set $y(e)=g^{-1}\in S$.

We have now defined $x(v)$ for each vertex of the partitioned
triangle~$\Delta$ so that (i) and~(ii) hold, and these determine $y(e)$
for each directed edge~$e$ so that (iii) holds. We have seen that $y(e)\in
S$ if $e$ is an edge on the left or right side of~$\Delta$ or if $e$
is a horizontal edge not lying on the bottom side of~$\Delta$ whose
right hand endpoint does not lie on the right side of~$\Delta$. Also,
$y(e)=1\in S$ for all edges~$e$ in the bottom side of~$\Delta$ except the
middle one, for which $y(e)=g$. The theorem will be proved once we
check that $y(e)\in S$ for each edge~$e$.

\begin{lemma}\label{lem:thirdedge}Suppose that $v$, $v'$ and~$v''$ are the vertices of a subtriangle in~$\Delta$,
and that $e$, $e'$ and~$e''$ are the edges from $v$ to~$v'$, $v'$ to~$v''$, and $v$ to~$v''$,
respectively. Suppose that $y(e)$ and $y(e')$ are in~$S$. Then $y(e'')\in S$ too.
\end{lemma}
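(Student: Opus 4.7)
The plan is to combine property (iii) with hypothesis (b) of Theorem~\ref{thm:mainresult}. From (iii),
\[
x(v'')=x(v')y(e')=x(v)y(e)y(e')\quad\text{and also}\quad x(v'')=x(v)y(e''),
\]
so $y(e'')=y(e)y(e')$. Since $y(e)$ and $y(e')$ lie in $S$ by assumption, hypothesis~(b) reduces the problem to verifying
\[
d(y(e'').0,0)=d(x(v).0,x(v'').0)\le r_1.
\]

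For most configurations of $v$ and $v''$, the triangle inequality along $x(v).0\to\varphi(v)\to\varphi(v'')\to x(v'').0$ does the job. The middle distance is at most~$\epsilon$, since $v$ and $v''$ lie in a common refined subtriangle, and the two outer distances are controlled by (i) and (ii): each is at most $r_0$ unless the corresponding vertex lies in the interior of the bottom side of~$\Delta$, in which case the bound is $r_1/2$. Using $\epsilon\le(r_1-2r_0)/2$, i.e.\ $\epsilon\le r_1/2-r_0$, one checks that the sum is at most $r_1$ provided that at most one of $v,v''$ lies in the interior of the bottom side.

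The one situation in which this estimate fails is when \emph{both} $v$ and $v''$ lie in the interior of the bottom side of~$\Delta$. In that case $v$ and $v''$ are adjacent bottom vertices, so by construction $x(v),x(v'')\in\{1,g\}$, and therefore $x(v).0$ and $x(v'').0$ each equal $0$ or $g.0$. Thus $d(x(v).0,x(v'').0)$ is either $0$ or $d(0,g.0)\le r_1$, the latter being exactly the hypothesis on the element $g\in\Gamma$ whose membership in~$S$ we are trying to establish. The only delicate point in the proof is recognising this exceptional case so that the generous bound $r_1/2$ on bottom-interior vertices is not fed into the triangle inequality (where it would overshoot $r_1$); once the bottom-interior pair is handled by direct inspection of the values of~$x(v)$, hypothesis~(b) of Theorem~\ref{thm:mainresult} immediately gives $y(e'')\in S$ in every case.
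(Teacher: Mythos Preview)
Your proof is correct and follows essentially the same approach as the paper's: you derive $y(e'')=y(e)y(e')$ from~(iii), then verify $d(x(v).0,x(v'').0)\le r_1$ via the triangle inequality through $\varphi(v)$ and $\varphi(v'')$, separating out the exceptional case where both endpoints lie on the bottom side. The only cosmetic difference is that the paper splits cases according to whether the \emph{edge}~$e''$ lies on the bottom side (handling the middle edge and the other bottom edges explicitly as $y(e'')=g$ or $y(e'')=1$), whereas you split according to whether both \emph{vertices} lie in the interior of the bottom side and then observe $x(v),x(v'')\in\{1,g\}$ directly; these partitions are equivalent and the arithmetic is the same.
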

\begin{proof}We have $y(e)=x(v)^{-1}x(v')$, $y(e')=x(v')^{-1}x(v'')$ and $y(e'')=x(v)^{-1}x(v'')$,
and so $y(e'')=y(e)y(e')$. If $e''$ is the middle edge of the bottom side of~$\Delta$, 
then $d(y(e'').0,0)=d(g.0,0)\le r_1$ by hypothesis, and so $y(e'')\in S$ by~(b) above.
If $e''$ is any other edge of the bottom side of~$\Delta$, then $y(e'')=1\in S$. So
we may assume that $e''$ is not on the bottom side of~$\Delta$. Hence at most one of $v$ and~$v''$
lies on that bottom side. Hence
\begin{displaymath}
\begin{aligned}
d(0,y(e'').0)&=d(x(v).0,x(v)y(e'').0)\\
&=d(x(v).0,x(v'').0)\\
&\le d(x(v).0,\varphi(v))+d(\varphi(v),\varphi(v''))+d(\varphi(v''),x(v'').0)\\
&<d(x(v).0,\varphi(v))+d(\varphi(v''),x(v'').0)+\epsilon\\
&\le r_0+r_1/2+\epsilon\quad(\dagger)\\
&\le r_1,
\end{aligned}
\end{displaymath}
where the inequality $(\dagger)$ holds because at least one of $d(x(v).0,\varphi(v))\le r_0$
and $d(\varphi(v''),x(v'').0)\le r_0$ holds, since at most one of $v$ and~$v''$
is a vertex of the bottom side of~$\Delta$, and if say $v$ is such a 
point, then we still have $d(x(v).0,\varphi(v))\le r_1/2$.
\end{proof}
\begin{proof}[Conclusion of the proof of Theorem~\ref{thm:mainresult}]
We must show that $y(e)\in S$ for all edges. We use Lemma~\ref{lem:thirdedge}, working
down from the top of~$\Delta$, and moving from left to right. So in the order indicated
in the next diagram we work down $\Delta$, finding that $y(e)\in S$ in each case, until we get to the
lowest row of triangles. 
\begin{center}
\begin{pspicture}(0,-0.5)(3,3)
\psset{xunit=1,yunit=1}%
\psline[linecolor=black]{-}(0,0)(1.5,2.598)(3,0)(0,0)
\psline[linecolor=black]{-}(0.5,0.866)(2.5,0.866)
\psline[linecolor=black]{-}(1.0,1.732)(2.0,1.732)
\psline[linecolor=black]{-}(0.5,0.866)(1.0,0)(2.0,1.732)
\psline[linecolor=black]{-}(1.0,1.732)(2.0,0)(2.5,0.866)
\psline[linecolor=black,linestyle=dotted]{-}(-0.2,-0.346)(0,0)
\psline[linecolor=black,linestyle=dotted]{-}(0.8,-0.346)(1,0)
\psline[linecolor=black,linestyle=dotted]{-}(1.8,-0.346)(2,0)
\psline[linecolor=black,linestyle=dotted]{-}(1.2,-0.346)(1,0)
\psline[linecolor=black,linestyle=dotted]{-}(2.2,-0.346)(2,0)
\psline[linecolor=black,linestyle=dotted]{-}(3.2,-0.346)(3,0)
\rput[c](0,0){$\scriptstyle\bullet$}
\rput[c](0.5,0.866){$\scriptstyle\bullet$}
\rput[c](1.0,1.732){$\scriptstyle\bullet$}
\rput[c](1.5,2.598){$\scriptstyle\bullet$}
\rput[c](2.0,1.732){$\scriptstyle\bullet$}
\rput[c](2.5,0.866){$\scriptstyle\bullet$}
\rput[c](3,0){$\scriptstyle\bullet$}
\rput[c](2,0){$\scriptstyle\bullet$}
\rput[c](1,0){$\scriptstyle\bullet$}\rput[b](1.5,1.832){$\scriptstyle{e_1}$}
\rput[r](1.2,1.25){$\scriptstyle{e_2}$}
\rput[l](1.8,1.25){$\scriptstyle{e_3}$}
\rput[t](2,0.800){$\scriptstyle{e_4}$}
\rput[r](0.725,0.400){$\scriptstyle{e_5}$}
\rput[l](0.9,0.432){$\scriptstyle{e_6}$}
\rput[r](1.7,0.400){$\scriptstyle{e_7}$}
\rput[l](2.3,0.400){$\scriptstyle{e_8}$}
\rput[t](2.5,-0.1){$\scriptstyle{e_9}$}
\end{pspicture}
\end{center}

Then working from the left and from the right, we
find that $y(e)\in S$ for all the diagonal edges in the lowest row.

Finally, we get to the middle triangle in the lowest row. For the diagonal 
edges $e$, $e'$ of that triangle, we have found that $y(e),y(e')\in S$, and so
$y(e'')\in S$ for the horizontal edge $e''$ of that triangle too, by Lemma~\ref{lem:thirdedge}.
\end{proof}
If we make the extra assumption that the set of values $d(g.0,0)$, $g\in\Gamma$, is discrete, 
the following result is a consequence of \cite[Theorem~I.8.10]{BridsonHaefliger} 
(applied to the open set $U=\{x\in X:d(x,0)<r_0+\epsilon\}$ for $\epsilon>0$ small).
We shall sketch a proof along the lines of that of Theorem~\ref{thm:mainresult}
which does not make that extra assumption.
\begin{theorem}\label{thm:triplespresentation}With the hypotheses of 
Theorem~\ref{thm:mainresult}, let $S_0=\{g\in S:d(g.0,0)\le2r_0\}$, as before. Then
the set of generators $S_0$, and the relations $g_1g_2g_3=1$, where the $g_i$ 
are each in~$S_0$, give a presentation of~$\Gamma$.
\end{theorem}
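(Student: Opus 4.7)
The strategy is to adapt the van~Kampen style diagram argument that drives the proof of Theorem~\ref{thm:mainresult}. Lemma~\ref{lem:tworzero} already says that $S_0$ generates~$\Gamma$, so it suffices to show that every word $w=y_1\cdots y_n$ in~$S_0$ representing $1\in\Gamma$ lies in the normal closure of the set of triples $g_1g_2g_3=1$, $g_i\in S_0$. I would replace the triangle~$\Delta$ used for Theorem~\ref{thm:mainresult} by a topological disk~$D$ whose boundary is an $n$-gon with vertices $v_0,\ldots,v_{n-1}$; map $v_i\mapsto(y_1\cdots y_i).0$ and send each boundary edge to the geodesic between consecutive images. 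The loop closes because $y_1\cdots y_n=1$, and simple-connectedness of~$X$ gives a continuous extension $\varphi:D\to X$.

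Fix $\epsilon>0$ smaller than $(r_1-2r_0)/2$ and smaller than the $\delta/2$ of the remark following Lemma~\ref{lem:remepsilon}. Subdivide each boundary edge into an odd number~$r$ of equal pieces and triangulate the interior so finely that $\operatorname{diam}\varphi(T)<\epsilon$ on every subtriangle~$T$. As in Theorem~\ref{thm:mainresult} I would then assign $x(v)\in\Gamma$ at every vertex so that $d(x(v).0,\varphi(v))\le r_0$, and define $y(e)=x(v)^{-1}x(w)$ on the directed edge from~$v$ to~$w$. On the subdivided boundary edge $[v_{i-1},v_i]$ use $x(v_{i-1})$ on the first half and $x(v_i)$ on the second half; since the whole boundary geodesic has length at most $2r_0$, each half has length at most~$r_0$, and the resulting $y(e)$ are all~$1$ except the middle one, which equals $y_i\in S_0$. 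For interior vertices, sweep outward-to-inward (or left-to-right along horizontal rows) and apply Lemma~\ref{lem:remepsilon}; by the remark there the element of~$S$ produced actually lies in~$S_0$, so every sweep edge carries $y(e)\in S_0$. For a remaining diagonal edge $e''$ of a subtriangle, the third-edge estimate of Lemma~\ref{lem:thirdedge}, now using the strong bound $d(x(v).0,\varphi(v))\le r_0$ at \emph{both} endpoints (there is no privileged ``bottom side'' here), gives
\begin{displaymath}
d(y(e'').0,0)\le 2r_0+\epsilon<2r_0+\delta,
\end{displaymath}
and hypothesis~(b) of Theorem~\ref{thm:mainresult} places $y(e'')$ in~$S$; the discreteness gap~$\delta$ then forces $y(e'')\in S_0$.

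With every edge label in~$S_0$, each subtriangle with vertices $v,v',v''$ and directed edges $e=(v,v')$, $e'=(v',v'')$, $e''=(v,v'')$ satisfies
\begin{displaymath}
y(e)\,y(e')\,y(e'')^{-1}=x(v)^{-1}x(v')\cdot x(v')^{-1}x(v'')\cdot x(v'')^{-1}x(v)=1,
\end{displaymath}
which, since $S_0$ is symmetric, is a triple relation $g_1g_2g_3=1$ with all $g_i\in S_0$. Reading off the boundary of the triangulated~$D$ spells the word~$w$ interleaved with trivial letters, and a standard van~Kampen diagram reduction rewrites~$w$ as a product of conjugates of these triple relators. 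The main obstacle is making sure every edge label lies in~$S_0$ rather than merely in~$S$: the sweep edges are handled by the remark after Lemma~\ref{lem:remepsilon}, while the diagonal edges need the slightly finer $\epsilon<\delta$ estimate together with the discreteness gap~$\delta$ extracted inside the proof of Lemma~\ref{lem:tworzero}; everything else is parallel to the argument already carried out for Theorem~\ref{thm:mainresult}.
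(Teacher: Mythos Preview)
Your proposal is correct and matches the paper's proof almost step for step: the paper also maps the boundary of a triangulated disk (in their case an equilateral triangle, after padding the word so that $n=3m$) into~$X$ via geodesics, extends by simple-connectedness, labels all vertices via Lemma~\ref{lem:remepsilon} and its remark (taking $\epsilon<\delta/2$ as you do) so that $d(x(v).0,\varphi(v))\le r_0$ everywhere with no special bottom side, then applies the $S_0$-version of Lemma~\ref{lem:thirdedge} with the sharper bound $d(y(e'').0,0)\le 2r_0+\epsilon<2r_0+\delta$ to get every edge label into~$S_0$, and finally reduces the boundary word to a product of conjugates of the triangle relators. The only cosmetic difference is that the paper keeps its structured equilateral subdivision (so the inductive order in which Lemma~\ref{lem:thirdedge} is applied is explicit), whereas you work with a general $n$-gon and leave the shelling order implicit.
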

\begin{proof}Let $\tilde S_0$ be a set with a bijection $f:\tilde s\mapsto s$ from $\tilde S_0$ to~$S_0$.
Let $F$ be the free group on~$\tilde S_0$, and denote also by~$f$ the induced homomorphism $F\to\Gamma$.
Then $f$ is surjective by Lemma~\ref{lem:tworzero}. Let $\tilde y_1,\ldots,\tilde y_n\in\tilde S_0$, 
and suppose that $\tilde g=\tilde y_1\cdots\tilde y_n\in F$ is in the kernel of~$f$. We must show 
that $\tilde g$ is in the normal closure~$H$ of the set of elements of~$F$ of the form $\tilde s_1\tilde s_2\tilde s_3$,
where $\tilde s_i\in\tilde S_0$ for each~$i$, and $s_1s_2s_3=1$ in~$\Gamma$. As $1\in S_0$, we may
assume that $n$ is a multiple~$3m$ of~3, and form a triangle~$\Delta$ partitioned into $m^2$ congruent
subtriangles, as in the proof of Theorem~\ref{thm:mainresult}. The vertices are again denoted~$v_i$,
and we write $v_{3m}=v_0$, and $y_i=f(\tilde y_i)$ for each~$i$.

We again define a continuous function $\varphi$ from the boundary of~$\Delta$ 
to~$X$ which maps $v_0$ to~$0$ and $v_i$ to $(y_1\cdots y_i).0$ for $i=1,\ldots,3m$, and
which for $i=1,\ldots,3m$ maps the segment $[v_{i-1},v_i]$ to the geodesic from~$\varphi(v_{i-1})$
to~$\varphi(v_i)$. From $y_1y_2\cdots y_n=1$ we see that $\varphi(v_0)=1=g=\varphi(v_{3m})$. This time
the bottom side of~$\Delta$ is mapped in the same way as the other two sides.

Let $\epsilon>0$, with $\epsilon<\delta/2,(r_1-2r_0)/2$, and as before, we partition~$\Delta$ into
subtriangles so that whenever $t,t'$ are in the same subtriangle, $d(\varphi(t),\varphi(t'))<\epsilon$ holds.

Using Lemma~\ref{lem:remepsilon} and the remark after it, we can again choose 
elements $x(v)\in\Gamma$ for each vertex~$v$ of each of the subtriangles, so 
that $d(x(v).0,\varphi(v))\le r_0$ for each~$v$, without the complications about 
the bottom side of~$\Delta$ which had to be dealt with in the proof of 
Theorem~\ref{thm:mainresult}. If $e$ is the edge from $v'$ to~$v$, where $v'$ is
immediately to the left of~$v$, and $v$ is not on the right side of~$\Delta$,
then $y(e)=x(v')^{-1}x(v)\in S_0$. We then define elements 
$y(e)$ for each directed edge~$e$ of each of the subtriangles so that $x(w)=x(v)y(e)$ 
if $e$ is the edge from~$v$ to~$w$. Again using Lemma~\ref{lem:thirdedge}, with
$S$ there replaced by~$S_0$, and arguing as in the conclusion of the proof 
of Theorem~\ref{thm:mainresult}, we deduce that $y(e)\in S_0$ for each edge~$e$.

Of the $r$ edges~$e$ in the segment from~$v_{i-1}$ to~$v_i$, we have $y(e)=1$ 
except for the middle edge, for which $y(e)=y_i$. So $y(e_1)y(e_2)\cdots y(e_{3mr})=1$, 
where $e_1,\ldots,e_{3mr}$ are the successive edges as we traverse the sides of~$\Delta$ 
in a clockwise direction starting at~$v_0$, and $\tilde g=\tilde y(e_1)\cdots\tilde y(e_{3mr})$.

It is now easy to see that $\tilde g\in H$, by using $y(e'')=y(e)y(e')$ in
the situation of~(1), so that $\tilde y(e'')=\tilde y(e)\tilde y(e')h$ for
some $h\in H$. Then in the situation of~(2),
for example, $y(e)y(e')=y(e)(y(e^*)y(e'''))=(y(e)y(e^*))y(e''')=y(e'')y(e''')$,
\begin{center}
\begin{pspicture}(-4,-0.5)(6,1.532)
\psset{xunit=0.7,yunit=0.7}%
\psline[linecolor=black]{-}(-4,0)(-3,1.732)(-2,0)(-4,0)
\psline[linecolor=black]{->}(-4,0)(-3.333,1.1547)
\psline[linecolor=black]{->}(-3,1.732)(-2.333,0.577)
\psline[linecolor=black]{->}(-4,0)(-2.666,0)
\rput[c](-4,0){$\scriptstyle\bullet$}
\rput[c](-3,1.732){$\scriptstyle\bullet$}
\rput[c](-2,0){$\scriptstyle\bullet$}
\rput[t](-3,-0.3){(1)}
\psline[linecolor=black]{-}(4,0)(3,1.732)(5,1.732)
\psline[linecolor=black]{-}(4,0)(5,1.732)(6,0)(4,0)
\rput[c](4,0){$\scriptstyle\bullet$}
\rput[c](3.0,1.732){$\scriptstyle\bullet$}
\rput[c](5.0,1.732){$\scriptstyle\bullet$}
\rput[c](6,0){$\scriptstyle\bullet$}
\rput[r](-3.7,0.9){$\scriptstyle{e}$}
\rput[l](-2.3,1){$\scriptstyle{e'}$}
\rput[b](-3,0.2){$\scriptstyle{e''}$}
\psline[linecolor=black]{->}(3,1.732)(3.666,0.577)
\psline[linecolor=black]{->}(3,1.732)(4.333,1.732)
\psline[linecolor=black]{->}(5,1.732)(5.666,0.577)
\psline[linecolor=black]{->}(4,0)(5.333,0)
\psline[linecolor=black]{->}(5,1.732)(4.333,0.577)

\rput[b](4,1.9){$\scriptstyle{e}$}
\rput[l](5.7,1){$\scriptstyle{e'}$}
\rput[r](3.2,0.75){$\scriptstyle{e''}$}
\rput[b](5,0.2){$\scriptstyle{e'''}$}
\rput[r](4.5,1){$\scriptstyle{e^*}$}
\rput[t](5,-0.3){(2)}
\end{pspicture}
\end{center}
so that $\tilde y(e)\tilde y(e')=\tilde y(e)(\tilde y(e^*)\tilde y(e''')h)
=(\tilde y(e)\tilde y(e^*))\tilde y(e''')h=(\tilde y(e'')h')\tilde y(e''')h
=\tilde y(e'')\tilde y(e''')h''$ for some $h,h',h''\in H$. We can, for example, successively 
use this device to remove the right hand strip of triangles from~$\Delta$, reducing
the size of the triangle being treated, and then repeat this process.
\end{proof}
In the next proposition, we use \cite[Theorem~I.8.10]{BridsonHaefliger} to show that under extra
hypotheses (satisfied by the example in Section~3), we can omit the $g\in S_0$ for which 
$d(g.0,0)=2r_0$, and still get a presentation.
\begin{proposition}\label{prop:triplespresentation}Let $X=B(\C^2)$, and suppose that the set of values $d(g.0,0)$, $g\in\Gamma$,
is discrete. Assume also the hypotheses of Theorem~\ref{thm:mainresult}. Then the set
$S_0^*=\{g\in S:d(g.0,0)<2r_0\}$ is a set of generators of~$\Gamma$, and the 
relations $g_1g_2g_3=1$, where the $g_i$ are each in~$S_0^*$,  give a presentation of~$\Gamma$.
\end{proposition}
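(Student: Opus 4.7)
\noindent The plan is to apply Theorem~I.8.10 of Bridson and Haefliger, as flagged in the paragraph preceding the proposition. For a suitable open set $U\subset X$ with $\Gamma U=X$, that theorem presents $\Gamma$ by generators $\Sigma=\{g\in\Gamma:gU\cap U\ne\emptyset\}$ and a complete set of triple relations of the form $g_1g_2g_3=1$ coming from triple intersections $g_1U\cap g_2U\cap g_3U\ne\emptyset$. The task is to choose $U$ so that $\Sigma=S_0^*$ and the relations take the claimed form. The natural choice is $U=B_{r_0}(0)=\{x\in X:d(x,0)<r_0\}$: in the geodesic space $X=B(\C^2)$, the overlap condition $gU\cap U\ne\emptyset$ is, by the triangle inequality, equivalent to $d(g.0,0)<2r_0$, i.e.\ $g\in S_0^*$, and the triple-overlap conditions translate to the relations listed in the proposition.

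The BH hypotheses of openness, path-connectedness, and simple-connectedness of~$X$ are immediate. The delicate requirement is $\Gamma U=X$. A point $x\in\cF_S$ at which the maximum $r_0$ of $d(0,\cdot)$ is attained satisfies $d(\gamma.0,x)\ge r_0$ for every $\gamma\in\Gamma$ by definition of the Dirichlet domain (using Corollary~\ref{cor:funddomain}), and so is missed by $\Gamma U$. Here I would use the discreteness of $\{d(g.0,0):g\in\Gamma\}$ and the fact that $\cF_S$ is a compact bisector-bounded region in $B(\C^2)$ to argue that the missing set $X\setminus\Gamma U=\Gamma E$, where $E=\{x\in\cF_S:d(0,x)=r_0\}$, has codimension at least two in the real $4$-manifold~$X$ (typically $E$ is a finite set of vertices of~$\cF_S$ at which $d(0,\cdot)$ attains its maximum). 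That codimensional statement is enough to adapt the simply-connected covering argument underpinning BH~I.8.10: loops and disks in~$X$ can be perturbed off a codimension-$2$ subset without changing the overlap or triple-overlap combinatorics attached to~$U$.

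A safer, more combinatorial alternative is to apply BH~I.8.10 instead with $U=B_{r_0+\epsilon}(0)$, choosing $\epsilon>0$ small enough by discreteness that $\Sigma=S_0$ (recovering the presentation of Theorem~\ref{thm:triplespresentation}), and then Tietze-eliminating each $g\in S_0\setminus S_0^*$: the midpoint $m$ of the geodesic from~$0$ to~$g.0$ satisfies $m=\gamma.y$ for some $\gamma\in\Gamma$ and $y\in\cF_S$, and the triple overlap at~$m$ provides a BH triple relation expressing $g=\gamma\cdot(\gamma^{-1}g)$. The main obstacle, common to both routes, is the same geometric input: one must show that for each $g\in S_0\setminus S_0^*$ a witnessing $\gamma\notin\{1,g\}$ exists, equivalently that the midpoint~$m$ lies on a bisector face of codimension at least two rather than in the relative interior of the codimension-$1$ face $\cF_S\cap g.\cF_S$. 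This is the geometric fact about the bisector geometry of~$\cF_S$ in~$B(\C^2)$ that the discreteness hypothesis and the rigidity of the hyperbolic ball are meant to secure, and it is where the proof genuinely goes beyond the purely combinatorial argument of Theorem~\ref{thm:triplespresentation}.
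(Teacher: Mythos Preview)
Your instinct to invoke BH~I.8.10 and to recognize the obstruction at the midpoints is right, but the way you propose to resolve it has a genuine gap.

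In your first route you want to show that $E=\{x\in\cF_S:d(0,x)=r_0\}$ has real codimension at least~$2$ and then argue that loops and disks can be perturbed off~$\Gamma E$. Two problems: (i) you have no argument that $E$ has codimension~$\ge2$ --- nothing in the hypotheses rules out $r_0$ being attained along an arc of a bisector face --- and (ii) even granting codimension~$2$, removing a codimension-$2$ subset from a simply connected $4$-manifold can destroy simple connectedness (think of a circle in~$\R^3$, or a $2$-disk in~$\R^4$), so the perturbation argument does not go through. Your Tietze route runs into the same wall, as you yourself note: you would still need that the midpoint $m$ of $[0,g.0]$ sees a third translate $\gamma.\cF_S$ with $\gamma\notin\{1,g\}$, and you have no mechanism for this.

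The paper sidesteps all of this by removing much less. For each $g\in S$ with $d(g.0,0)=2r_0$ it takes only the midpoint $m_g$ of $[0,g.0]$; the set $M$ of such midpoints is finite, so its $\Gamma$-orbit $Y$ is discrete. One then deletes small disjoint open balls $B(y,\delta)$, $y\in Y$, to form $X'$. Since points have real codimension~$4$ in $B(\C^2)$, $X'$ is still simply connected. BH~I.8.10 is applied with $U=\{x\in X':d(x,0)<r_0+\delta'\}$. The crucial step, which replaces your unproved codimension claim, is a CAT(0) estimate: if $d(g.0,0)=2r_0$ and $x$ satisfies $d(0,x),d(g.0,x)<r_0+\delta'$, then comparison with the Euclidean triangle gives $d(x,m_g)^2\le(r_0+\delta')^2-r_0^2$, which is $<\delta$ for $\delta'$ small. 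Hence any such $x$ lies in the deleted ball about $m_g$ and is not in~$X'$; so $gU\cap U=\emptyset$. Combined with discreteness (forcing $d(g.0,0)\le2r_0$ once $d(g.0,0)<2r_0+2\delta'$), this yields $\Sigma=S_0^*$ directly, with no Tietze step and no face-geometry input. That CAT(0) localisation at the midpoint is the idea your argument is missing.
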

\begin{proof}For each $g\in S$ such that $d(g.0,0)=2r_0$, let $m$ be the midpoint
of the geodesic from~0 to~$g.0$. Let $M$ be the set of these midpoints. Let $\delta_1>0$ be
so small that $2r_0+2\delta_1<r_1$. Since $M$ and~$S$ are finite, we can choose a positive
$\delta<\delta_1$ so that if $m,m'\in M$ and $g\in S$, and if $d(g.m,m')<2\delta$, then
$g.m=m'$. So if $\gamma,\gamma'\in\Gamma$, $m,m'\in M$ and 
$B(\gamma.m,\delta)\cap B(\gamma'.m',\delta)\ne\emptyset$, then $d(\gamma.0,\gamma'.0)<2r_0+2\delta<r_1$,
so that $\gamma^{-1}\gamma'\in S$ by Theorem~\ref{thm:mainresult}, and $d(m,\gamma^{-1}\gamma'.m')<2\delta$,
so that $\gamma.m=\gamma'.m'$ by choice of~$\delta$. Thus if $Y$ is the union of the $\Gamma$-orbits of
all $m\in M$, then the balls $B(y,\delta)$, $y\in Y$, are pairwise disjoint. Now let $X'$ denote the 
subset of~$X$ obtained by removing all these balls. It follows (because the ambient 
dimension is $>2$) that $X'$~is still simply connected.

Let $U=\{x\in X' : d(x,0)<r_0+\delta'\}$ of~$X'$, for some $\delta'>0$. The proposition will follow
from~\cite[Theorem~I.8.10]{BridsonHaefliger}, applied to this~$U$, once we show that if $\delta'>0$ is small 
enough, then any $g\in\Gamma$ such  that $g(U)\cap U \ne \emptyset$ must satisfy $d(g.0,0)<2r_0$. 
Clearly $d(g.0,0) < 2r_0+2\delta'$ holds, and because of the discreteness hypothesis, if $\delta'>0$
is small enough, one even has $d(g.0,0) \le 2r_0$. Now suppose $d(g.0,0)=2r_0$, 
let $m$~be the midpoint on the geodesic from~$0$ to~$g.0$, and suppose 
$x\in g(U)\cap U$.  Then $d(0,x)<r_0+\delta'$ and $d(g.0,x)<r_0+\delta'$.
Using the CAT(0) property of~$X$, this shows that
$d(x,m)^2+r_0^2 \leq (r_0+\delta')^2$,
so that
\begin{displaymath}
d(x,m) \le \sqrt{(r_0+\delta')^2-r_0^2},
\end{displaymath}
and this last can be made less than~$\delta$ if $\delta'$~is chosen
small enough.  This contradicts the hypothesis that $x\in U\subset X'$.
\end{proof}

The fact that $X'$~is simply connected could also be used to modify
the version of the proof going through the triangle-shaped simplicial
complex.

\end{section}
\begin{section}{An example}
Let $\ell=\Q(\zeta)$, where $\zeta$ is a primitive 12-th root of unity. Then
$\zeta^4=\zeta^2-1$, so that $\ell$ is a degree~4 extension of~$\Q$. Let
$r=\zeta+\zeta^{-1}$ and $k=\Q(r)$. Then $r$ and~$\zeta^3$ are square roots 
of~3 and~$-1$, respectively, and if $\zeta=e^{2\pi i/12}$ then $r=+\sqrt{3}$ and $\zeta^3=i$.
Let
\begin{equation}\label{eq:Fdefn}
F=\begin{pmatrix}-r-1&1&0\\1&1-r&0\\0&0&1\end{pmatrix},
\end{equation}
and form the group
\begin{displaymath}
\Gamma=\{g\in M_{3\times3}(\Z[\zeta]):g^*Fg=F\}/\{\zeta^\nu I:\nu=0,1,\ldots,11\}.
\end{displaymath}
We shall use the results of Section~\ref{sec:generalresults} to find a presentation for $\Gamma$.
Let us first motivate the choice of this example. Now $\iota(g)=F^{-1}g^*F$ defines an involution of the second kind on the simple algebra $M_{3\times3}(\ell)$.
We can define an algebraic group $G$ over~$k$ so that
\begin{displaymath}
G(k)=\{g\in M_{3\times3}(\ell):\iota(g)g=I\ \text{and}\ \det(g)=1\}.
\end{displaymath}
For the corresponding adjoint group $\overline G$, 
\begin{displaymath}
\overline G(k)=\{g\in M_{3\times3}(\ell):\iota(g)g=I\}/\{tI:t\in\ell\ \text{and}\ \bar tt=1\}.
\end{displaymath}
Now $k$ has two archimedean places $v_+$ and $v_-$, corresponding to the two embeddings 
$k\hookrightarrow\R$ mapping $r$ to $+\sqrt3$ and~$-\sqrt3$, respectively.
The eigenvalues of~$F$ are 1 and $-r\pm\sqrt{2}$. So the form~$F$ is definite for $v_-$
but not for $v_+$. Hence
\begin{displaymath}
\overline G(k_{v_-})\cong PU(3)\quad\text{and}\quad\overline G(k_{v_+})\cong PU(2,1).
\end{displaymath} 
Letting $V_f$ denote the set of non-archimedean places of~$k$, if $v\in V_f$, then either
$\overline G(k_v)\cong PGL(3,k_v)$ if $v$ splits in~$\ell$, or
$\overline G(k_v)\cong PU_F(3,k_v(i))$ if $v$ does not split in~$\ell$.
With a suitable choice of maximal parahorics $\overline P_v$ in~$\overline G(k_v)$,
the following group
\begin{displaymath}
\bar\Gamma=\overline G(k)\cap \prod_{v\in V_f}\overline P_v
\end{displaymath}
is one of the maximal arithmetic subgroups of $PU(2,1)$
whose covolume has the form $1/N$, $N$ an integer. Prasad and Yeung \cite{PY1,PY2}
have described all such subgroups, up to $k$-equivalence. In this case $N=864$. 
As in~\cite{CS}, lattices can be used to describe concretely maximal parahorics.
We can take $\overline P_v=\{g\in\overline G(k_v):g.x_v=x_v\}$, where, in the cases when~$v$
splits in~$\ell$, $x_v$ is the
homothety class of the $\mathfrak o_v$-lattice $\mathfrak o_v^3\subset k_v^3$, where
$\mathfrak o_v$ is the valuation ring of~$k_v$. When $v$ does not split, $x_v$ is the
lattice $\mathfrak o_v^3$, where now $\mathfrak o_v$ is the valuation ring of $k_v(i)$.
With this particular choice of parahorics, $\bar\Gamma$ is just~$\Gamma$.

The action of~$\Gamma$ on the unit ball $X=B(\C^2)$ in~$\C^2$ is described as follows, 
making explicit the isomorphism $\overline G(k_{v_+})\cong PU(2,1)$. Let
\begin{equation}\label{eq:variousmatrices}
\gamma_0=\begin{pmatrix}
1&  0&0\\
1&1-r&0\\
0&  0&1
\end{pmatrix},
\
F_{\mathrm{diag}}=\begin{pmatrix}
1&0&  0\\
0&1&  0\\
0&0&1-r
\end{pmatrix},
\ \text{and}
\
F_0=\begin{pmatrix}
1&0&  0\\
0&1&  0\\
0&0&-1
\end{pmatrix}.
\end{equation}
Then $\gamma_0^tF_{\mathrm{diag}}\gamma_0=(1-r)F$, so a matrix $g$ is unitary with
respect to~$F$ if and only if $g'=\gamma_0g\gamma_0^{-1}$ is unitary with respect
to~$F_{\mathrm{diag}}$. Now let $D$ be the diagonal matrix
with diagonal entries 1, 1 and~$\sqrt{\sqrt{3}-1}$. Taking $r=+\sqrt3$, 
if $g'$ is unitary with respect to~$F_{\mathrm{diag}}$, then $\tilde g=Dg'D^{-1}$ is unitary
with respect to~$F_0$; that is, $\tilde g\in U(2,1)$. If $Z=\{\zeta^\nu I:\nu=0,\ldots,11\}$,
for an element $gZ$ of~$\Gamma$, the action of $gZ$ on~$B(\C^2)$ is given by 
the usual action of~$\tilde g$. That is,
\begin{displaymath}
(gZ).(z,w)=(z',w')\quad\text{if}\quad 
\tilde g\begin{pmatrix}z\\w\\1\end{pmatrix}=\lambda\begin{pmatrix}z'\\w'\\1\end{pmatrix}
\quad\text{for some}\ \lambda\in\C.
\end{displaymath}

Now let $u$ and $v$ be the matrices
\begin{displaymath}
u=\begin{pmatrix}
\zeta^3+\zeta^2-\zeta&1-\zeta&0\\
\zeta^3+\zeta^2-1&\zeta-\zeta^3&0\\
0&0&1
\end{pmatrix}
\quad\text{and}\quad
v=\begin{pmatrix}
\zeta^3&0&0\\
\zeta^3+\zeta^2-\zeta-1&1&0\\
0&0&1\\
\end{pmatrix},
\end{displaymath}
respectively. They have entries in~$\Z[\zeta]$, are unitary with respect to~$F$,
and satisfy 
\begin{displaymath}
u^3=I,\ v^4=I,\ \text{and}\ (uv)^2=(vu)^2. 
\end{displaymath}
They (more precisely, $uZ$ and $vZ$) generate a subgroup $K$ of~$\Gamma$ of order~288.
Magma shows that an abstract group with presentation $\langle u,v:u^3=v^4=1,\ (uv)^2=(vu)^2\rangle$
has order~288, and so $K$ has this presentation. 

Let us write simply~0 for the origin~$(0,0)\in B(\C^2)$, and $g.0$ in place of $(gZ).(0,0)$.

\begin{lemma} For the action of $\Gamma$ on $X$, $K$ is the stabilizer of~0.
\end{lemma}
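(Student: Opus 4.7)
The plan is to prove the two inclusions $K\subseteq\mathrm{Stab}_\Gamma(0)$ and $\mathrm{Stab}_\Gamma(0)\subseteq K$; the second will be obtained by showing that the stabilizer has the same order~$288$ as~$K$.

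First, I would check directly that $u.0=0$ and $v.0=0$. Both $u$ and~$v$ have last column $e_3=(0,0,1)^t$, and both $\gamma_0$ and~$D$ preserve the line $\C e_3$: the third column and third row of~$\gamma_0$ are $e_3$ and $(0,0,1)$ respectively, while $D$ is diagonal. Tracing $\tilde g=D\gamma_0 g\gamma_0^{-1}D^{-1}$ applied to~$e_3$, one finds $\tilde u e_3=e_3$ and $\tilde v e_3=e_3$, so that $u.0=0$ and $v.0=0$. Since $u$ and~$v$ generate~$K$, we get $K\subseteq\mathrm{Stab}_\Gamma(0)$.

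Conversely, suppose $g.0=0$, i.e.\ $\tilde g e_3\in\C e_3$. Reading the identifications backwards yields $g e_3\in\ell e_3$, so the last column of~$g$ is $(0,0,c)^t$ for some $c\in\Z[\zeta]$. Since $F$ is block-diagonal with blocks $F_2=\begin{pmatrix}-r-1&1\\1&1-r\end{pmatrix}$ and~$(1)$, expanding $g^*Fg=F$ with the last column of~$g$ of this form forces the last row of~$g$ also to vanish off the diagonal: $g=\mathrm{diag}(A,c)$, with $A\in M_{2\times 2}(\Z[\zeta])$ satisfying $A^*F_2A=F_2$ and $\bar cc=1$. The latter means~$c$ is a unit in~$\Z[\zeta]$ all of whose complex conjugates have absolute value~$1$, so $c$ is a root of unity in $\ell=\Q(\zeta)$; since the group of roots of unity in~$\ell$ is $\langle\zeta\rangle$, we may write $c=\zeta^\nu$, and replacing $g$ by $\zeta^{-\nu}g$ within its $Z$-coset normalizes $c=1$. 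This yields a group isomorphism
\begin{displaymath}
\mathrm{Stab}_\Gamma(0)\ \cong\ U_{F_2}(2,\Z[\zeta]):=\{A\in M_{2\times 2}(\Z[\zeta]):A^*F_2A=F_2\},
\end{displaymath}
given by $\mathrm{diag}(A,1)Z\mapsto A$. The form $F_2$ has determinant~$1$ and eigenvalues $-r\pm\sqrt2$, which are positive at~$v_-$ and negative at~$v_+$, so $F_2$ is definite (up to sign) at every archimedean place of~$k$. Hence $U_{F_2}(2,\Z[\zeta])$ is a finite arithmetic group: any $A$ in it has operator norm~$1$ with respect to the positive-definite form $\pm F_2$ at each archimedean place of~$\ell$, so every entry of~$A$ is an algebraic integer in~$\ell$ whose complex conjugates are bounded in terms of $F_2$ alone, confining the search to a finite set.

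The main obstacle is then verifying that this finite group has order exactly~$288$. The injection $K\hookrightarrow U_{F_2}(2,\Z[\zeta])$ already provides $|U_{F_2}(2,\Z[\zeta])|\ge 288$; the reverse inequality is a bounded enumeration over the finite search region above, easily handled by the same sort of Magma calculation invoked to compute~$|K|$. Combining the two gives $K=\mathrm{Stab}_\Gamma(0)$.
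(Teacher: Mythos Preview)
Your proof is correct and follows essentially the same route as the paper's: reduce to block-diagonal matrices $\mathrm{diag}(A,1)$ with $A^*F_2A=F_2$, use definiteness of~$F_2$ to confine the entries to a finite set, and then enumerate. The paper states the block-diagonal form and the definiteness observation and declares the rest ``routine''; your version spells out more of the intermediate steps (why the third row must also vanish, why $c$ is a root of unity, why definiteness at \emph{both} archimedean places is what bounds all conjugates of the entries), but the underlying argument is the same.
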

\begin{proof}It is easy to see that $gZ\in\Gamma$ fixes~0 if and only if $gZ$ has a matrix representative
\begin{displaymath}
\begin{pmatrix}
g_{11}&g_{12}&0\\
g_{21}&g_{22}&0\\
0&0&1
\end{pmatrix},
\end{displaymath}
for suitable $g_{ij}\in\Z[\zeta]$. Since the $2\times2$ block matrix in the upper left 
of~$F$ is definite when $r=+\sqrt3$, it is now routine to determine all such $g_{ij}$.
\end{proof}
The next step is to find $g\in\Gamma\setminus K$ for which $d(g.0,0)$ is small, 
where $d$ is the hyperbolic metric on~$B(\C^2)$. Now
\begin{equation}\label{eq:hyperbolicdistance}
\cosh^2(d(z,w))=\frac{|1-\langle z,w\rangle|^2}{(1-|z|^2)(1-|w|^2)},
\end{equation}
(see \cite[Page~310]{BridsonHaefliger} for example) 
where $\langle z,w\rangle=z_1\bar w_1+z_2\bar w_2$ and $|z|=\sqrt{|z_1|^2+|z_2|^2}$
for $z=(z_1,z_2)$ and~$w=(w_1,w_2)$ in~$B(\C^2)$.

In particular, writing 0 for the origin in~$B(\C^2)$, and using 
$g.0=(g_{13}/g_{33},g_{23}/g_{33})$ and
$|g_{13}|^2+|g_{23}|^2=|g_{33}|^2-1$ for $g=(g_{ij})\in U(2,1)$, we see that
\begin{equation}\label{eq:distg0to0}
\cosh^2(d(0,g.0))=|g_{33}|^2
\end{equation}
for $g\in U(2,1)$. Notice that for $gZ\in\Gamma$, the $(3,3)$-entry of~$g$ is equal to the $(3,3)$-entry
of the $\tilde g\in U(2,1)$ defined above, and so \eqref{eq:distg0to0} holds also
for $gZ\in\Gamma$.

The matrix 
\begin{displaymath}
b=\begin{pmatrix}
                         1&                     0&0\\
-2\zeta^3-\zeta^2+2\zeta+2&\zeta^3+\zeta^2-\zeta-1&-\zeta^3-\zeta^2\\
             \zeta^2+\zeta&             -\zeta^3-1&-\zeta^3+\zeta+1
\end{pmatrix}
\end{displaymath}
is unitary with respect to~$F$. We shall see below that $u$, $v$ and~$b$
generate~$\Gamma$, and use the results of Section~\ref{sec:generalresults} to show that 
some relations they satisfy give a presentation of~$\Gamma$. This $b$ was found 
by a computer search for $g\in\Gamma\setminus K$ for which $d(g.0,0)$ is small.

Notice that $d(g.0,0)$ is constant on each double coset $KgK$. Calculations 
using~\eqref{eq:distg0to0} showed that amongst the 288 elements $g\in\Gamma$
of the form $bkb$, $k\in K$, there are ten different values of $d(g.0,0)$. Representatives
$\gamma_j$ of the 20~double cosets $KgK$ in which the $g\in bKb$ lie were chosen. The
smallest few $|(\gamma_j)_{33}|^2$ and the corresponding $d(\gamma_j.0,0)$ (rounded 
to 4~decimal places) are as follows:
\begin{center}
\begin{tabular}[t]{|c|c|c|c|}\hline
\vbox to 2.5ex{}$j$&$\gamma_j$&$|(\gamma_j)_{33}|^2$&$d(\gamma_j.0,0)$\\[0.25ex]\hline
\vbox to 2.5ex{}$1$&$1$&1&0\\[0.25ex]\hline
\vbox to 2.5ex{}$2$&$b$&$\sqrt{3}+2$&$1.2767$\\[0.25ex]\hline
\vbox to 2.5ex{}$3$&$bu^{-1}b$&$2\sqrt{3}+4$&$1.6629$\\[0.25ex]\hline
\vbox to 2.5ex{}$4$&$bu^{-1}v^{-1}u^{-1}b$&$3\sqrt{3}+6$&$1.8778$\\[0.25ex]\hline
%
%
%
%
\end{tabular}
\end{center}

\medskip\noindent We use Theorem~\ref{thm:mainresult} to show that our computer search has
not missed any $g\in\Gamma$ for which $d(g.0,0)$ is small. Let $S=K\cup K\gamma_2K\cup K\gamma_3K$,
consisting of the $g\in\Gamma$ found by the computer search to satisfy
$|g_{33}|^2\le2\sqrt{3}+4$. To verify that $S$ generates~$\Gamma$,
we need to make a numerical estimate. A somewhat longer direct proof that $\langle S\rangle=\Gamma$ 
is given in the next section.

\begin{proposition}\label{prop:volestimate}For the given $S\subset\Gamma$, the normalized hyperbolic volume 
$\vol(\cF_S)$ of $\cF_S=\{x\in B(\C^2):d(x,0)\le d(x,g.0)\ \text{for all}\ g\in S\}$ satisfies
$\vol(\cF_S)<2/864$. The set $S$ generates~$\Gamma$.
\end{proposition}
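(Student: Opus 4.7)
The proof breaks into two parts: the substantive step is the volume bound, and the generation claim follows from it by a short covolume argument.

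For the generation statement, suppose that $H=\langle S\rangle$ were a proper subgroup of~$\Gamma$. The Dirichlet fundamental domain $\cF_H=\{x\in X:d(0,x)\le d(h.0,x)\text{ for all }h\in H\}$ centred at~$0$ satisfies $\cF_H\subseteq\cF_S$, because it is cut out by a superset of the inequalities defining~$\cF_S$. Hence $\vol(\cF_H)\le\vol(\cF_S)<2/864$. On the other hand, the covolume of~$H$ scales with the index: $\vol(\cF_H)=[\Gamma:H]/864\ge2/864$, since the normalised covolume of~$\Gamma$ equals $1/864$ by the Prasad--Yeung computation recalled above. This contradiction forces $H=\Gamma$.

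For the volume bound itself, the strategy is to exploit the symmetries of~$\cF_S$ and then integrate explicitly. Since $S$ is the union of the double cosets $K$, $K\gamma_2K$ and $K\gamma_3K$, left multiplication by any $k\in K$ sends $S$ to itself, and combining this with $k.0=0$ one checks that $k.\cF_S=\cF_S$; thus $\cF_S$ is $K$-invariant. Furthermore the bisector $\{x:d(0,x)=d(g.0,x)\}$ depends on $g$ only through $g.0$, hence only through the right coset $gK$, so only a handful of distinct bisectors actually appear. By~\eqref{eq:hyperbolicdistance} each such bisector is the real quadric $|1-\langle g.0,x\rangle|^2=1-|g.0|^2$ inside the unit ball, which gives an explicit semi-algebraic description of~$\cF_S$. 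It then suffices to bound $288\,\vol(\cF_S\cap F)$, where $F$ is a fundamental domain for~$K$ near~$0$, and this can be done in geodesic polar coordinates centred at~$0$: along each direction the ray exits~$\cF_S$ at the radius determined by the innermost active bisector, so the volume reduces to a one-dimensional radial quadrature after integration over the sphere of directions.

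The main obstacle is turning the numerical integration into a rigorous strict inequality $\vol(\cF_S)<2/864$. Bisectors in complex hyperbolic space are not totally geodesic, so $\cF_S$ is not a hyperbolic polytope and one cannot simply add up polytope volumes; instead one must sandwich $\cF_S\cap F$ between approximating regions whose volumes can be bounded by quadrature with a controlled error, for example by partitioning the sphere of directions finely enough and exploiting monotonicity of the hyperbolic radial measure in the resulting prisms. The factor of~$2$ in the target inequality means the required accuracy is only modestly better than $1/864$, well within reach of a direct computer calculation. Once this bound is secured, the generation of~$\Gamma$ by~$S$ follows at once from the covolume argument above.
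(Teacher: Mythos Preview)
Your proposal is correct and follows essentially the same approach as the paper. The covolume argument for generation is identical in logic (the paper writes $\cF\subset\cF'\subset\cF_S$ and derives $M\vol(\cF)\le\vol(\cF_S)<2\vol(\cF)$), and for the volume bound the paper simply asserts that ``standard numerical integration methods show that (up to several decimal place accuracy) $\vol(\cF_S)$ equals~$1/864$,'' so your elaboration on $K$-invariance, bisector description, and quadrature with error control is more detail than the paper itself provides, but in the same spirit.
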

\begin{proof}Standard numerical integration methods show that (up to several decimal place
accuracy) $\vol(\cF_S)$ equals~$1/864$, but all we need is that $\vol(\cF_S)<2/864$.  
Let us make explicit the normalization of hyperbolic volume element in~$B(\C^2)$ which makes 
the formula $\chi(\Gamma\backslash B(\C^2))=3\vol(\cF)$ true. For $z\in B(\C^2)$, write 
\begin{displaymath}
\begin{aligned}
t&=\tanh^{-1}|z|=\frac{1}{2}\log\Bigl(\frac{1+|z|}{1-|z|}\Bigr)=d(0,z),\\
\Theta&=z/|z|\in S^1(\C^2),\\
d\Theta&=\text{the usual measure on}\ S^1(\C^2),\ \text{having total volume}\ 2\pi^2,\\
d\vol(z)&=\frac{2}{\pi^2}\sinh^3(t)\cosh(t)dt\,d\Theta.
\end{aligned}
\end{displaymath}
Let $\Gamma'=\langle S\rangle$. Then
the Dirichlet fundamental domains~$\cF$ and~$\cF'$ of~$\Gamma$ and~$\Gamma'$ satisfy
$\cF\subset\cF'\subset\cF_S$. By \cite[\S8.2, the $\cC_{11}$ case]{PY1}, $\vol(\cF)=1/864$. Let $M=[\Gamma:\Gamma']$. Then
$\vol(\cF')=M\vol(\cF)$ (and $\vol(\cF')=\infty$ if $M=\infty$), so that $M\vol(\cF)=\vol(\cF')\le\vol(\cF_S)<2\vol(\cF)$
implies that $M=1$ and $\Gamma'=\Gamma$.
\end{proof}

\begin{proposition}\label{prop:rzeroestimate}For the given $S\subset\Gamma$, the value of $r_0=\sup\{d(x,0):x\in \cF_S\}$
is~$\frac{1}{2}d(\gamma_3.0,0)=\frac{1}{2}\cosh^{-1}(\sqrt{3}+1)$.
\end{proposition}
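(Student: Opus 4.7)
The plan is to establish both inequalities $r_0 \ge \frac{1}{2}d(\gamma_3.0,0)$ and $r_0 \le \frac{1}{2}d(\gamma_3.0,0)$. First I would unpack the numerical identity: from the table entry $|(\gamma_3)_{33}|^2 = 2\sqrt{3}+4 = (\sqrt{3}+1)^2$ and~\eqref{eq:distg0to0}, $\cosh d(\gamma_3.0,0) = \sqrt{3}+1$, so the target value is the distance from~$0$ to the midpoint $m_3$ of the geodesic segment $[0,\gamma_3.0]$ in $B(\C^2)$.

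For the lower bound, I would exhibit $m_3$ as a point of $\cF_S$. Being the midpoint, $m_3$ satisfies $d(m_3,0) = d(m_3,\gamma_3.0)$, and the constraint $d(m_3,0) \le d(m_3,g.0)$ holds trivially for $g\in K$ since $g.0=0$. It remains to verify this inequality for the finitely many $g$ in the remaining double cosets of $S$, namely $K\gamma_2 K$ and the rest of $K\gamma_3 K$. Each such verification is a direct computation via $d(m_3,g.0) = d(g^{-1}.m_3,0)$ combined with~\eqref{eq:hyperbolicdistance}, and the stabilizer of $m_3$ in~$K$ cuts the number of cases down further.

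For the upper bound, I would show that no $x\in\cF_S$ has $d(x,0)>\frac{1}{2}d(\gamma_3.0,0)$. Since $\cF_S=\bigcap_{g\in S}H_g$ is a convex hyperbolic polytope containing~$0$, with each $H_g=\{x:d(x,0)\le d(x,g.0)\}$ bounded by the perpendicular bisector of $[0,g.0]$, the supremum of $d(\cdot,0)$ on $\cF_S$ is attained at a vertex. I would therefore enumerate the vertices of $\cF_S$---equivalently, the minimal intersections of bisector hyperplanes lying in $\cF_S$---and check that each lies within the closed ball of radius $\frac{1}{2}d(\gamma_3.0,0)$ about~$0$. The $K$-invariance of $\cF_S$, which follows because $S$ is $K$-bi-invariant and $K$ fixes~$0$, reduces this enumeration to a fundamental domain for the $K$-action on~$\cF_S$.

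The main obstacle is the upper bound. The lower bound reduces to a bounded set of explicit hyperbolic distance evaluations using~\eqref{eq:hyperbolicdistance}, but the upper bound requires enough control over the combinatorial structure of the polytope $\cF_S$ to rule out a distant vertex formed by some unexpected intersection of bisector hyperplanes. This detailed geometric computation, relying on the explicit list of $g.0$ for $g\in S$ and the Hermitian form~$F$, is the substantive work presumably carried out in Section~4.
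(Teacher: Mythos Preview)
Your lower-bound strategy matches the paper's Lemma~\ref{lem:rzeroestimate}: one shows that the midpoint~$m$ of $[0,\gamma_3.0]$ lies in~$\cF_S$. The paper does not brute-force the finitely many inequalities but instead argues that any $g\in S$ with $d(g.0,m)<d(0,m)$ would have to lie in $KbK\cap\gamma_3KbK$, then uses the order-$2$ element $u\gamma_3^{-1}$ (which swaps $0$ and $\gamma_3.0$ and hence fixes~$m$) to reduce everything to the single check that $b.m=m$.

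Your upper-bound plan, however, rests on a false premise. In complex hyperbolic space the bisector $\{x:d(x,0)=d(x,g.0)\}$ is \emph{not} totally geodesic, and the half-space $H_g$ it bounds is \emph{not} convex; consequently $\cF_S=\bigcap_g H_g$ is not a convex polytope, has no well-defined ``vertices'' in the sense you need, and the principle that the maximum of $d(\cdot,0)$ is attained at a vertex simply does not apply. This is a well-known obstruction to transferring real-hyperbolic Dirichlet-domain arguments to~$B(\C^2)$.

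The paper circumvents this entirely. Lemma~\ref{lem:fsdescription} rewrites each condition $d(0,z)\le d(0,g.z)$ as $|g_{31}z_1+g_{32}z_2+g_{33}|\ge1$, so membership in~$\cF_S$ becomes a list of conditions saying that certain $\C$-linear combinations $u_1w_1+u_2w_2$ avoid a union of twelve discs (Lemma~\ref{lem:fsviacircles}). The upper bound (Proposition~\ref{prop:rho0bound}) is then obtained by planar geometry: at a point of~$\cF_{S^*}$ maximizing $|w|$ one pins down $w_1$ to an explicit arc, reads off from the circle constraints that $|w_2|\le|\sqrt{r-1}-w_1|$, and a short calculation gives $|w_1|^2+|w_2|^2\le 2r-3$, exactly the required bound. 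No polytope combinatorics is involved.
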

\begin{proof}We defer the proof of this result to the next section.
\end{proof}
One may verify that if $g,g'\in S$ and $gg'\not\in S$, then $|(gg')_{33}|^2\ge3\sqrt{3}+6$.
So if $r_1$ satisfies $2\sqrt{3}+4<\cosh^2(r_1)<3\sqrt{3}+6$, then~$S$
satisfies the conditions of Theorem~\ref{thm:mainresult}. 
\begin{remark}By Corollary~\ref{cor:funddomain}, $\cF_S=\cF$, so that $\vol(\cF_S)$ 
equals~$1/864$.
\end{remark}
Having verified all the conditions of Theorem~\ref{thm:mainresult}, we now know that $S$ contains
all elements $g\in\Gamma$ satisfying $d(g.0,0)\le r_1$. The double 
cosets $K\gamma_jK$, $j=1,2,3$, are symmetric because $(buvu)^2v=\zeta^{-1}I$, and $(bu^{-1})^4=I$ (see below). 
The sizes of these $K\gamma_jK$ are~288, $288^2/4$ and~$288^2/3$, respectively, 
adding up to~$48,672$, because $\{k\in K:\gamma_2^{-1}k\gamma_2\in K\}=\langle v\rangle$, and
$\{k\in K:\gamma_3^{-1}k\gamma_3\in K\}=\langle u\rangle$. Proposition~\ref{prop:triplespresentation} 
gives a presentation of~$\Gamma$ which we now simplify. 
\begin{proposition}\label{prop:c11emptypresentation}A presentation of~$\Gamma$ is given by the generators
$u$, $v$ and~$b$ and the relations
\begin{equation}\label{eq:c11emptypresentation}
u^3=v^4=b^3=1,\ (uv)^2=(vu)^2,\ vb=bv,\ (buv)^3=(buvu)^2v=1.
\end{equation}
\end{proposition}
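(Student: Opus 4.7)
My plan is to derive~(\ref{eq:c11emptypresentation}) from the ``triple presentation'' supplied by Proposition~\ref{prop:triplespresentation}, via Tietze transformations. First I check that Proposition~\ref{prop:triplespresentation} applies: $X = B(\C^2)$, and $\{d(g.0,0) : g\in\Gamma\}$ is discrete because $\Gamma$ itself is discrete in $PU(2,1)$. By Proposition~\ref{prop:rzeroestimate} one has $\cosh^2(2r_0) = (\sqrt 3+1)^2 = 2\sqrt 3+4 = |(\gamma_3)_{33}|^2$, so $S_0^* = S \setminus K\gamma_3 K = K\cup KbK$. Hence the triple presentation has $K\cup KbK$ as its generating set (of size $288+288^2/4 = 21{,}024$) and all triples $g_1g_2g_3$ of elements of $S_0^*$ with $g_1g_2g_3=1$ in $\Gamma$ as its relators.

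Next I verify that each relation in~(\ref{eq:c11emptypresentation}) really holds in $\Gamma$. The three $K$-relations were already stated earlier. The four remaining relations $b^3=1$, $vb=bv$, $(buv)^3=1$, $(buvu)^2v=1$ are direct matrix computations modulo the center~$Z$. This yields a surjective homomorphism $\pi:\tilde\Gamma\twoheadrightarrow \Gamma$, where $\tilde\Gamma$ denotes the abstract group presented by~(\ref{eq:c11emptypresentation}).

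To show $\pi$ is an isomorphism I translate the triple presentation into expressions in $u,v,b$. Every element of $K$ is a word in $u,v$, and since Magma confirms $|K|=288$ the three $K$-relations already provide a full presentation of~$K$; consequently every triple relator whose three factors all lie in $K$ is a consequence of~(\ref{eq:c11emptypresentation}). Every element of $KbK$ is of the form $kbk'$ with $k,k'\in K$, and the relation $vb=bv$ disambiguates this parametrization by the stabilizer $\langle v\rangle$ of $b$ under left (and right) multiplication, while $b^3=1$ covers the basic three-$b$ relation $b\cdot b\cdot b=1$. A triple relator with exactly one factor in $KbK$ cannot occur, as it would force $b\in K$. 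What remains are the triple relators with two or three factors in $KbK$, of the respective forms $(k_1bk_2)(k_3bk_4)k_5=1$ and $(k_1bk_2)(k_3bk_4)(k_5bk_6)=1$. I claim that every two-$b$ relator follows, modulo the remaining relations of~(\ref{eq:c11emptypresentation}), from $(buvu)^2v=1$, and every three-$b$ relator from $(buv)^3=1$.

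The main obstacle is precisely this last claim. In principle one enumerates the $K$-conjugacy classes of two-$b$ and three-$b$ triple relators (a finite calculation controlled by the stabilizer $\langle v\rangle$ of the double coset $KbK$ on each side), brings each to a normal form using the $K$-relations and $vb=bv$, and matches the normal form to a $K$-conjugate of $(buvu)^2v$ or $(buv)^3$. This is a routine but bookkeeping-heavy verification, most naturally performed in Magma: construct $\tilde\Gamma$, check that each enumerated triple relator lies in the normal closure of the six stated relations, and conclude that $\ker\pi$ is trivial.
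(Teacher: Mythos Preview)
Your approach is essentially the same as the paper's: invoke Proposition~\ref{prop:triplespresentation} to obtain the triple presentation on $S_0^*=K\cup KbK$, rewrite every generator as a word in $u,v,b$, dispose of the pure-$K$ triples via the known presentation of~$K$, and then check that the remaining two-$b$ and three-$b$ triples follow from~\eqref{eq:c11emptypresentation}. The paper explicitly mentions that Magma's \texttt{Simplify} suffices for this last step, so your deferral to a machine check is sanctioned there.

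Where the paper goes further than you do is in carrying out that last step by hand. It observes first that the only two-$b$ relations $bkb=k'$ with $k,k'\in K$ are the four $bv^\nu uvub=v^{\nu-1}u^{-1}v^{-1}u^{-1}$, all consequences of $vb=bv$ and $(buvu)^2v=1$. For the three-$b$ relations it counts exactly $40$ elements $k\in K$ with $bkb\in KbK$, uses $vb=bv$ to reduce these $40$ relations to five representatives
\[
b1b,\quad bub,\quad buvu^{-1}b,\quad buvub,\quad buvu^{-1}vub,
\]
and then derives each of these five explicitly from~\eqref{eq:c11emptypresentation} in a few lines. So your ``routine but bookkeeping-heavy verification'' is in fact quite short once the reduction modulo $\langle v\rangle$ on both sides is made; you might consider replacing the Magma appeal with this concrete list. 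One small point: your claim that the three-$b$ relators follow ``from $(buv)^3=1$'' is a slight overstatement---the paper's derivations freely use $(buvu)^2v=1$ and $b^3=1$ as well, so it is really the full relation set~\eqref{eq:c11emptypresentation} that is needed, as your parenthetical ``modulo the remaining relations'' acknowledges.
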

\begin{proof}By Proposition~\ref{prop:triplespresentation}, the set $S^*_0=\{g\in\Gamma:d(g.0,0)<2r_0\}$ 
of generators, and the relations $g_ig_jg_k=1$, where $g_1,g_2,g_3\in S^*_0$, give a presentation of~$\Gamma$.
By Theorem~\ref{thm:mainresult}, $S_0^*$ is the union of the two double cosets $K\gamma_1K=K$ and~$K\gamma_2K$.
So these relations have the form $(k_1'\gamma_{i_1}k_1'')(k_2'\gamma_{i_2}k_2'')(k_3'\gamma_{i_3}k_3'')=1$, 
where $k_\nu',k_\nu''\in K$ and $i_1,i_2,i_3\in\{1,2\}$. Using the known presentation of~$K$, and cyclic permutations,
the relations of the form $\gamma_{i_1}k_1\gamma_{i_2}k_2\gamma_{i_3}k_3=1$, where $1\le i_1\le i_2,i_3\le2$, 
are sufficient to give a presentation. After finding a word in~$u$ and $v$ for each such $k_i$, 
we obtained a list of words in $u$, $v$ and~$b$ coming from these relations. Magma's routine {\tt Simplify}
can be used to complete the proof, but it is not hard to see this more directly as follows.
When $i_1=1$, we need only include in the list of words all those coming from the relations of the form (i):~$bkb=k'$,
for $k,k'\in K$. When $i_1=2$ (so that $i_2=i_3=2$ too), we need only include, for each $k_1\in K$ such that
$bk_1b\in KbK$, a single pair $(k_2,k_3)$ such that $bk_1bk_2bk_3=1$, since the other such relations
follow from this one and the relations~(i). The only relations of the form~(i) are the relations $bv^\nu uvub=v^{\nu-1}u^{-1}v^{-1}u^{-1}$,
$\nu=0,1,2,3$,
which follow from the relations $vb=bv$ and~$(buvu)^2v=1$ in~\eqref{eq:c11emptypresentation}. Next, matrix calculations show
that there are 40 elements~$k\in K$ such that $bkb\in KbK$, giving 40 relations $bkb=k'bk''$. We need only show that all of these
are deducible from the relations given in~\eqref{eq:c11emptypresentation}.
Using $bv=vb$, any equation $bkb=k'bk''$ gives equations $bv^ikv^jb=v^ik'bk''v^j$.
So we needed only deduce from~\eqref{eq:c11emptypresentation} the five relations
\begin{displaymath}
\begin{aligned}
b1b&=uvuvbuvu,\quad bub=ubu,\quad buvu^{-1}b=v^{-1}u^{-1}v^{-1}bu^{-1},\\ 
buvub&=(vu)^{-2},\quad\text{and}\quad buvu^{-1}vub=v^2u^{-1}v^{-1}u^{-1}bu^{-1}v^{-1}u^{-1}.
\end{aligned}
\end{displaymath}
Firstly, $(buvu)^2v=1$ and $(vu)^2=(uv)^2$ imply that $b^{-1}=vuvubuvu$, and this
and~$b^3=1$ imply the first relation.  The relations
$(bvu)^3=1$ and $bv=vb$ imply that $bub=(vubvuv)^{-1}$, and this and $b^{-1}=vuvubuvu$ give
$bub=ubu$. To get the third relation, use $vb=bv$ to see that $v(buvu^{-1}b)u$ equals
\begin{displaymath}
b(vuvu)ubu=b(vu)^2bub=b(vu)^2b(vu)^2(vu)^{-2}ub=v(uv)^{-2}u=u^{-1}v^{-1}b.
\end{displaymath}
The fourth relation is immediate from $(buvu)^2v=1$ and $bv=vb$. Finally, from
$(uv)^2=(vu)^2$ and our formula for $b^{-1}$ we have
\begin{displaymath}
(uvu)^{-1}b^{-1}(uvu)=(uvu)^{-1}(uvuvbuvu)(uvu)=vbuvu^2vu=vbk
\end{displaymath}
for $k=uvu^{-1}vu$, using $u^3=1$. So $vbk$ has order~3. Hence $bkb=v^{-1}(kvbv)^{-1}v^{-1}$,
and the fifth relation easily follows.
\end{proof}

As mentioned above, $(bu^{-1})^4=1$. By Proposition~\ref{prop:c11emptypresentation}, 
this is a consequence of the relations in~\eqref{eq:c11emptypresentation}. Explicitly,
\begin{displaymath}
\begin{aligned}
(bu^{-1})^4=bu(ubu)(ubu)(ubu)u=bu(bub)(bub)(bub)u&=b(ubu)bbubb(ubu)\\
&=b(bub)bbubb(bub)\\
&=bbuuub\\
&=1.
\end{aligned}
\end{displaymath}

Let us record here the connection between~$\Gamma$ and a Deligne-Mostow group whose presentation (see Parker~\cite{Parker}) is
\begin{displaymath}
\Gamma_{3,4}=\langle J,R_1,A_1\ :\ J^3=R_1^3=A_1^4=1,\ A_1=(JR_1^{-1}J)^2,\ A_1R_1=R_1A_1\rangle.
\end{displaymath}
Using the fact that the orbifold Euler characteristics of $\Gamma_{3,4}\backslash B(\C^2)$ and
$\Gamma\backslash B(\C^2)$ are both equal to~$1/288$, several experts, including John Parker,
Sai-Kee Yeung and Martin Deraux, knew that $\Gamma$ and~$\Gamma_{3,4}$ are isomorphic.
The following proof based on presentations is a slight modification of one communicated to 
us by~John Parker. It influenced our choice of the generators $u$ and~$v$ for~$K$.

\begin{proposition}\label{prop:DMiso}There is an isomorphism $\psi:\Gamma\to\Gamma_{3,4}$ such that
\begin{equation}\label{eq:psiconds}
\psi(u)=JR_1J^{-1},\quad \psi(v)=A_1,\quad \text{and}\quad \psi(b)=R_1.
\end{equation}
Its inverse satisfies $\psi^{-1}(J)=buv$, $\psi^{-1}(R_1)=b$ and $\psi^{-1}(A_1)=v$. 
\end{proposition}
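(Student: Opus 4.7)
The plan is to invoke the universal property of group presentations twice and produce mutually inverse homomorphisms. Define $\phi\colon\Gamma_{3,4}\to\Gamma$ by $J\mapsto buv$, $R_1\mapsto b$, $A_1\mapsto v$, and $\psi\colon\Gamma\to\Gamma_{3,4}$ by $u\mapsto JR_1J^{-1}$, $v\mapsto A_1$, $b\mapsto R_1$. For each, verify that every defining relation of the source group becomes a true relation in the target, then check that the composition $\psi\circ\phi$ is the identity on generators of~$\Gamma_{3,4}$; this forces both maps to be isomorphisms and simultaneously yields the inverse formulas claimed in~\eqref{eq:psiconds}.

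For $\phi$, the torsion relations $J^3=R_1^3=A_1^4=1$ and the commutation $A_1R_1=R_1A_1$ map directly to relations in~\eqref{eq:c11emptypresentation}. The substantive one, $A_1=(JR_1^{-1}J)^2$, becomes $v=(buvb^{-1}buv)^2$. I would first use $bv=vb$ to slide $b^{-1}$ past the intervening~$v$, reducing $buvb^{-1}buv$ to $buvuv$; then $(uv)^2=(vu)^2$ shows that $v$ commutes with $uvu$, hence with $buvu$, and the relation $(buvu)^2=v^{-1}$ (from $(buvu)^2v=1$) gives $(buvuv)^2=(buvu)^2v^2=v$.

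For $\psi$, the cornerstone is the telescoping identity
\[
\psi(buv)\;=\;R_1\cdot JR_1J^{-1}\cdot A_1\;=\;J\quad\text{in}\ \Gamma_{3,4},
\]
which drops out on substituting $A_1=JR_1^{-1}J^{-1}R_1^{-1}J$ (a rewrite of $(JR_1^{-1}J)^2$ using $J^3=1$) and cancelling. Granting this, the relations $(buv)^3=1$ and $(buvu)^2v=1$ map respectively to $J^3=1$ and $(J^{-1}R_1J^{-1})^2A_1=1$, the latter collapsing by the same cancellation; for $(uv)^2=(vu)^2$, the analogous short computation gives $\psi(uv)=R_1^{-1}J$, after which the equality of $\psi((uv)^2)$ and $\psi((vu)^2)$ follows by combining this with $A_1R_1=R_1A_1$.

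Finally, $\psi\phi(R_1)=R_1$ and $\psi\phi(A_1)=A_1$ are tautological, while $\psi\phi(J)=\psi(buv)=J$ by the same telescoping identity; thus $\psi\circ\phi=\mathrm{id}_{\Gamma_{3,4}}$. Since $\phi$ is visibly surjective ($u=\phi(R_1^{-1}JA_1^{-1})$ lies in its image), this makes $\phi$ and~$\psi$ mutually inverse isomorphisms. The main obstacle is recognising the one-line identity $R_1(JR_1J^{-1})A_1=J$: every nontrivial relation to be checked under~$\psi$, as well as the composition step, is essentially an instance of this single cancellation, so once it is isolated the rest of the verification is mechanical.
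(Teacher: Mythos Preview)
Your proposal is correct and follows essentially the same approach as the paper: both define the two candidate homomorphisms via the presentations, verify the defining relations (with the identity $R_1\cdot JR_1J^{-1}\cdot A_1=J$, written $R_1R_2A_1=J$ in the paper, as the key step), and then conclude they are mutual inverses. The only cosmetic difference is that the paper checks both compositions on generators, whereas you check $\psi\circ\phi=\mathrm{id}$ together with surjectivity of~$\phi$; either route finishes the argument.
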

\begin{proof}Setting $R_2=JR_1J^{-1}$, we have $R_1R_2A_1=J$ and $A_1JR_2=JR_1^{-1}J$. So
\begin{displaymath}
(\psi(u)\psi(v))^2=(R_1^{-1}J)^2=R_1^{-1}\cdot A_1JR_2=A_1R_1^{-1}JR_2=A_1R_2A_1R_2=(\psi(v)\psi(u))^2.
\end{displaymath}
Next, $\psi(b)\psi(u)\psi(v)=R_1R_2A_1=J$, which implies that $(\psi(b)\psi(u)\psi(v))^3=1$.
Now $(\psi(b)\psi(u)\psi(v)\psi(u))^2\psi(v)=(R_1R_2A_1R_2)^2A_1=(JR_2)^2A_1=JR_2JR_1^{-1}J=1$. So there
is a homomorphism $\psi:\Gamma\to\Gamma_{3,4}$ satisfying~\eqref{eq:psiconds}. We similarly check that
we have a homomorphism~$\tilde\psi:\Gamma_{3,4}\to\Gamma$ mapping $J$, $R_1$ and~$A_1$ to $buv$, $b$ and~$v$,
respectively, and that $\psi$ and $\tilde\psi$ are mutually inverse.
\end{proof}
We now exhibit a torsion-free subgroup of $\Gamma$ having index~864. It has three generators,
all in $KbK$. The elements of~$K$ are most neatly expressed if we use not only the generators $u$ and~$v$,
but also $j=(uv)^2$, which is the diagonal matrix with diagonal entries $\zeta$, $\zeta$ and~1, 
and which generates the center of~$K$.

\begin{lemma}\label{lem:torsionelts}The non-trivial elements of finite order in~$\Gamma$ 
have order dividing~24.
\begin{itemize}
\item[(i)] Any element of order~2 is conjugate to one of $v^2$, $j^6$ or $(bu^{-1})^2$.
\item[(ii)] Any element of order~3 is conjugate to one of $u$, $j^4$, $uj^4$, $buv$, or their inverses.
\end{itemize}
\end{lemma}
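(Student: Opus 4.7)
The plan rests on the fixed-point principle for nonpositively curved spaces. With its hyperbolic metric $B(\C^2)$ is CAT$(0)$, so by Cartan's theorem every finite subgroup of its isometry group has a common fixed point; in particular every torsion $\gamma\in\Gamma$ fixes some $p\in B(\C^2)$. By Corollary~\ref{cor:funddomain}, the Dirichlet domain $\cF_S$ meets every $\Gamma$-orbit, so after replacing $\gamma$ by a $\Gamma$-conjugate one may assume $p\in\cF_S$. It therefore suffices to enumerate, up to $\Gamma$-conjugacy, the torsion in the stabilizers $\Gamma_p$ as $p$ runs over $\cF_S$.

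For $p=0$ the stabilizer is $K$, of order $288$. From the presentation $\langle u,v : u^3=v^4=1,\ (uv)^2=(vu)^2\rangle$ a direct computation (or an appeal to {\tt Magma}) shows that every element of $K$ has order dividing $24$, and that the conjugacy classes in $K$ of elements of orders $2$ and $3$ are represented by $v^2$, $j^6$, and by $u$, $j^4$, $uj^4$ together with their inverses.

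For $p\ne 0$ the point lies on $\partial\cF_S$, which by Theorem~\ref{thm:mainresult} decomposes into facets $W_g=\{x\in\cF_S : d(0,x)=d(g.0,x)\}$ for $g\in S\setminus\{1\}$. The $K$-action permutes these facets transitively within each of the double cosets $K\gamma_2K$ and $K\gamma_3K$, so after a further conjugation by an element of $K$ one may assume $g=b$ or $g=bu^{-1}b$. The stabilizer at a point of $W_g$ is generated by elements of $K$ that fix $p$ together with elements that swap $0$ and $g.0$; applying the relations of Proposition~\ref{prop:c11emptypresentation} and the derived relation $(bu^{-1})^4=1$, one produces the new order-$2$ class $(bu^{-1})^2$ from the facet $W_{bu^{-1}b}$, and the order-$3$ element $buv$ from the relation $(buv)^3=1$ at a suitable vertex of $\cF_S$. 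Each finite stabilizer encountered is a finite subgroup of the unitary isotropy group on $T_pB(\C^2)$, and inspection shows its exponent divides $24$, giving the first assertion of the lemma.

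The main obstacle will be completeness: verifying that no further $\Gamma$-conjugacy class of torsion arises from the remaining facets and vertices of $\cF_S$, and that the listed representatives are pairwise non-conjugate. Non-conjugacy is straightforward, since conjugate elements of $PU(2,1)$ have the same multi-set of eigenvalues on their $U(2,1)$ lifts (up to an overall scalar), and the representatives in (i) and (ii) are distinguished on this basis. Completeness amounts to a careful check that the facet and vertex analysis above exhausts the possibilities, which can be done by matching the cone-point count with the orbifold Euler characteristic $1/288$ of $\Gamma\backslash B(\C^2)$.
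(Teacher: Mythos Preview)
Your opening move---invoking the CAT(0) fixed-point theorem and conjugating so that the fixed point lies in $\cF_S$---matches the paper exactly. But from there you take a much harder road than necessary.

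The paper's argument is this: once $g$ fixes some $p\in\cF_S$, one has
\[
d(g.0,0)\le d(g.0,p)+d(p,0)=2d(0,p)\le 2r_0,
\]
and Theorem~\ref{thm:mainresult} (together with $r_0=\tfrac12\,d(\gamma_3.0,0)$) forces $g\in S=K\cup K\gamma_2K\cup K\gamma_3K$. Every element of $KhK$ is $K$-conjugate to an element of $hK$, so up to conjugacy $g$ lies in the set $K\cup\gamma_2K\cup\gamma_3K$ of exactly $864$ elements. A direct check of these $864$ matrices settles everything: orders divide $24$, and sorting the order-$2$ and order-$3$ elements into $\Gamma$-conjugacy classes is then a routine finite computation.

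Your route---classifying the stabilizers $\Gamma_p$ as $p$ ranges over $\cF_S$, stratified by interior, facets, and lower-dimensional faces---is in principle workable but considerably more delicate, and your proposal does not carry it through. The assertion that the stabilizer at a point of $W_g$ ``is generated by elements of $K$ that fix $p$ together with elements that swap $0$ and $g.0$'' is not established, and is not automatic: a boundary point can lie on several walls at once, and its stabilizer may be larger than what two walls suggest. The final ``careful check'' and the appeal to the orbifold Euler characteristic are stated as intentions rather than arguments. Also, the claim that $p\ne0$ forces $p\in\partial\cF_S$ holds only for $g\notin K$; elements of $K$ can fix interior points other than~$0$, though this is harmless since they are already captured by the analysis of $K$.

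So the gap is not a single wrong step but the absence of the key simplification: once the fixed point is in $\cF_S$, the element itself lies in the known finite set $S$, and no facet-by-facet geometry is needed at all.
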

\begin{proof}By \cite[Corollary II.2.8(1)]{BridsonHaefliger} for example, any $g\in U(2,1)$ of finite order fixes at least one point of~$B(\C^2)$, and so
in particular this holds for any~$g\in\Gamma$ of finite order. Conjugating~$g$, we may assume that
the fixed point is in the fundamental domain~$\cF$ of~$\Gamma$, and so $d(g.0,0)\le2r_0$.
Thus $g$ lies in $K\cup K\gamma_2K\cup K\gamma_3K$, and so is conjugate 
to an element of~$K\cup \gamma_2K\cup \gamma_3K$. Checking these 864 elements, we see that $g$ must
have order dividing~24. After listing the elements of order 2 and~3 amongst them,
routine calculations verify (i) and~(ii). 
\end{proof}

\begin{proposition}\label{prop:index864group}The elements
\begin{displaymath}
a_1=vuv^{-1}j^4buv j^2,\quad
a_2=v^2u buv^{-1}uv^2j\quad\text{and}\quad
a_3=u^{-1}v^2u j^9bv^{-1}uv^{-1}j^8
\end{displaymath}
generate a torsion-free subgroup~$\Pi$ of~864, for which $\Pi/[\Pi,\Pi]\cong\Z^2$.
\end{proposition}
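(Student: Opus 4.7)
The plan breaks into three pieces matching the three assertions: index equals $864$, torsion-freeness, and $\Pi^{\mathrm{ab}}\cong\Z^2$.

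\medskip

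\textbf{Step 1 (Index).} The starting point is the finite presentation of $\Gamma$ from Proposition~\ref{prop:c11emptypresentation} together with the explicit expressions of $a_1,a_2,a_3$ as words in $u,v,b$ (recall that $j=(uv)^2$ is itself a word in $u,v$). I would run Todd--Coxeter coset enumeration on the subgroup $\Pi=\langle a_1,a_2,a_3\rangle$. If the enumeration closes with index $864$, we simultaneously obtain an explicit permutation representation $\rho\colon\Gamma\to S_{\Gamma/\Pi}=S_{864}$, whose point-stabilizer at the trivial coset is exactly $\Pi$. This is most conveniently carried out in Magma or GAP.

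\medskip

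\textbf{Step 2 (Torsion-freeness).} Lemma~\ref{lem:torsionelts} reduces this to a finite check: $\Pi$ contains a nontrivial torsion element if and only if it contains a $\Gamma$-conjugate of one of the explicit representatives $t$ listed there (of orders dividing $24$). Passing through $\rho$, the condition $hth^{-1}\in\Pi$ is equivalent to $\rho(t)$ fixing the coset $\Pi h$. So I would compute $\rho(t)$ for each representative $t$ from Lemma~\ref{lem:torsionelts} and verify that it acts on the $864$ cosets without fixed points. Once $\rho$ is in hand from Step~1 this is essentially free.

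\medskip

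\textbf{Step 3 (Abelianization).} Using the Schreier transversal produced during the coset enumeration in Step~1, I would apply Reidemeister--Schreier rewriting to the presentation of $\Gamma$ to obtain an explicit finite presentation of $\Pi$. Computing the Smith normal form of the associated abelianized relation matrix yields $\Pi/[\Pi,\Pi]$, and the assertion is that this equals $\Z^2$ on the nose.

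\medskip

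\textbf{Main obstacle.} Step~1 is the only genuine obstacle: Todd--Coxeter is undecidable in general, and at index $864$ the enumeration requires machine computation. Steps~2 and~3 are both essentially mechanical byproducts of the coset enumeration: Step~2 is a fixed-point check on a permutation representation already constructed, and Step~3 is Reidemeister--Schreier followed by integer linear algebra. A sanity check on Step~1 is that $\chi(\Pi\backslash B(\C^2))=864\cdot\chi(\Gamma\backslash B(\C^2))=864\cdot\frac{1}{288}=3$, as one expects for a smooth ball quotient of this type.
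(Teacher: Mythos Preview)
Your proposal is correct and follows essentially the same route as the paper: coset enumeration (Magma's \texttt{Index}) for the index, the abelian quotient invariants for $\Pi^{\mathrm{ab}}\cong\Z^2$, and Lemma~\ref{lem:torsionelts} to reduce torsion-freeness to a finite check over conjugacy representatives of order~2 and~3. The only cosmetic difference is that the paper carries out Step~2 via the explicit transversal $\{b^\mu k:\mu=0,1,2,\ k\in K\}$ (verified to be a transversal by \texttt{Index}) and checks $b^\mu k\,t\,(b^\mu k)^{-1}\notin\Pi$ directly, whereas you phrase the same $864$ membership tests as a fixed-point check for $\rho(t)$ on the coset space; these are equivalent.
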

\begin{proof}Using our presentation of~$\Gamma$, Magma's {\tt Index\/} command
verifies that $\Pi$ has index~864 in~$\Gamma$, and the 
{\tt AbelianQuotientInvariants\/} command verifies that it has
abelianization~$\Z^2$. 

We now check that $\Pi$ is torsion-free. Suppose that $\Pi$ contains
an element $\pi\ne1$ of finite order. By Lemma~\ref{lem:torsionelts},
we can assume that $\pi$ has order~2 or~3. So for one of the
elements~$t$ listed in (i) and~(ii) of Lemma~\ref{lem:torsionelts},
there is a $g\in\Gamma$ such that $gtg^{-1}\in\Pi$. Using~{\tt Index},
one verifies that the 864 elements $b^\mu k$, $\mu=0,1,2$, $k\in K$, form a
transversal for $\Pi$ in~$\Gamma$. So we may assume that $g=b^\mu k$. But
now {\tt Index\/} verifies that none of the elements
$b^\mu kt(b^\mu k)^{-1}$ is in~$\Gamma$.
\end{proof}

We conclude this section by mentioning some other properties of~$\Pi$. 

Let us first note that $\Pi$ cannot be lifted to a subgroup of~$SU(2,1)$. The 
determinants of $a_1$, $a_2$ and~$a_3$ are $\zeta^3$, $\zeta^3$
and~$-1$, respectively, and so the $a_\nu$ could be replaced by $\zeta^{-1}\omega^{i_1}a_1$,
$\zeta^{-1}\omega^{i_2}a_2$ and $-\omega^{i_3}a_3$, where $\omega=e^{2\pi i/3}$
and $i_1,i_2,i_3\in\Z$, to obtain generators with determinant~1. But
$a_2^{-3}a_3^3a_1a_2a_3^{-3}a_2^3a_3^{-1}a_1^{-1}a_2^{-1}a_1a_3a_1^{-1}=\zeta^{-4}I$
is unchanged by any choice of the integers~$i_1$, $i_2$ and~$i_3$, as the number of $a_\nu$'s appearing 
in the product on the left is
equal to the number of~$a_\nu^{-1}$'s, for each~$\nu$. So we get a relation in $PU(2,1)$
but not in~$SU(2,1)$. It was found using Magma's {\tt Rewrite\/} command, 
which derives a presentation of~$\Pi$ from that of~$\bar\Gamma$.

Magma shows that the normalizer of~$\Pi$ in~$\Gamma$ contains~$\Pi$ as
a subgroup of index~3, and is generated by~$\Pi$ and~$j^4$. One may verify that
\begin{displaymath}
\begin{aligned}
j^4a_1j^{-4}&=\zeta^3a_3a_2^{-3}a_3^3a_1,\\
j^4a_2j^{-4}&=\zeta^{-1}a_3^{-1},\quad\text{and}\\
j^4a_3j^{-4}&=\zeta^{-1}a_1^{-1}a_2^{-1}a_1a_2^2a_1^{-1}a_2^{-1}a_1a_3^{-1}a_1^{-1}a_2a_1.\\
\end{aligned}
\end{displaymath}
Let us show that $j^4$ induces a non-trivial action on~$\Pi/[\Pi,\Pi]$.
By Proposition~\ref{prop:index864group}, there is an isomorphism $\varphi:\Pi/[\Pi,\Pi]\to\Z^2$, so we have
a surjective homomorphism $f:\Pi\to\Pi/[\Pi,\Pi]\cong\Z^2$. Using the relation 
$a_2^2a_1^{-1}a_2^{-1}a_1a_3^3a_1a_2^{-3}a_3^3a_1a_3a_1=\zeta^3I$, we see that $3f(a_1)-2f(a_2)+7f(a_3)=(0,0)$,
and since $\Pi/[\Pi,\Pi]\cong\Z^2$, this must be the only condition on the $f(a_\nu)$. So we can choose
the isomorphism $\varphi$ so that $f$ maps $a_1$, $a_2$ and~$a_3$ to $(1,3)$, $(-2,1)$ and
$(-1,-1)$, respectively, and then
\begin{equation}\label{eq:j4actiononabelianization}
f(\pi)=(m,n)\quad\implies f(j^4\pi j^{-4})=(m,n)\begin{pmatrix}0&-1\\1&-1\end{pmatrix}\quad\text{for all}\ \pi\in\Pi.
\end{equation}

Next consider the ball quotient $X=\Pi\backslash B(\C^2)$. Now $j^4$ induces an automorphism
of~$X$. Let us show that this automorphism has precisely 9 fixed points.

\begin{proposition}\label{prop:sigmafixedpts}The automorphism of~$X$ induced by~$j^4$ has exactly 9 fixed points. These are
the three points $\Pi(b^\mu.0)$, $\mu=0,1,-1$, and six points $\Pi(h_i.z_0)$, where $h_i\in\Gamma$ for $i=1,\ldots,6$,
and where $z_0\in B(\C^2)$ is the unique fixed point of~$buv$.
\end{proposition}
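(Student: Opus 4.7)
The plan is to identify fixed points of $\sigma$ on $X$ with $\Pi$-orbits of points $z \in B(\C^2)$ whose $\Gamma$-stabilizer meets the non-trivial cosets of $\Pi$ in $N_\Gamma(\Pi)$, and then to enumerate these using the presentation of $\Gamma$ and Magma, much as in the proof of Proposition~\ref{prop:index864group}.

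First I would set up the correspondence. The automorphism is well-defined on $X$ precisely because $j^4$ normalizes $\Pi$, and a point $\Pi z \in X$ is fixed by $\sigma$ if and only if $j^4.z \in \Pi.z$, i.e.\ if and only if $\text{Stab}_\Gamma(z)$ meets the coset $\Pi j^4$. Since $\Pi$ is torsion-free (Proposition~\ref{prop:index864group}) and $[N_\Gamma(\Pi):\Pi] = 3$, we get the disjoint decomposition $N_\Gamma(\Pi) = \Pi \sqcup \Pi j^4 \sqcup \Pi j^8$; any element of $N_\Gamma(\Pi)\setminus\Pi$ fixing a point of $B(\C^2)$ must have finite order, but then $g^3 \in \Pi$ forces $g^3 = 1$, so $g$ has order exactly~$3$.

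Second, by Lemma~\ref{lem:torsionelts}(ii), every order-$3$ element of $\Gamma$ is $\Gamma$-conjugate to one of $u^{\pm 1}$, $j^{\pm 4}$, $(uj^4)^{\pm 1}$, or $(buv)^{\pm 1}$. The first six representatives all lie in $K$ and fix $0$, while the last two fix the unique point $z_0 \in B(\C^2)$ determined by $buv$. Hence every fixed point of $\sigma$ has the form $\Pi(g.0)$ or $\Pi(g.z_0)$ for some $g \in \Gamma$, and the task reduces to determining for which cosets $g\Pi$ the element $g t g^{-1}$ lands in $\Pi j^4 \cup \Pi j^8$, for $t$ ranging over the order-$3$ representatives. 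Using the transversal $\{b^\mu k : \mu \in \{0,1,-1\},\ k \in K\}$ for $\Pi$ in $\Gamma$ (established in the proof of Proposition~\ref{prop:index864group}), this is a finite check that can be carried out with Magma's \texttt{Index} routine applied to the presentation of Proposition~\ref{prop:c11emptypresentation}.

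Third, I would package the output into $\Pi$-orbits. For the $K$-type fixed points, the condition reduces to showing that $b^\mu j^4 b^{-\mu} \in N_\Gamma(\Pi)$ (equivalently $b^\mu K b^{-\mu} \cap N_\Gamma(\Pi) \not\subseteq \Pi$) holds for exactly $\mu \in \{-1,0,1\}$, and that the three resulting orbits $\Pi(b^\mu.0)$ are distinct. For the $z_0$-type fixed points, one enumerates the $\Gamma$-conjugates $h(buv)h^{-1}$ lying in $\Pi j^4$ and groups them into $\Pi$-orbits; the expectation is that this yields exactly six orbits, represented by elements $h_1,\ldots,h_6 \in \Gamma$. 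The main obstacle is the careful coset bookkeeping: one must verify both that each claimed fixed point genuinely is fixed, and that no duplicate or missing $\Pi$-orbits occur, taking care not to double-count because $t$ and $t^{-1}$ share fixed points but contribute to the opposite cosets $\Pi j^4$ and $\Pi j^8$. This bookkeeping is mechanical in Magma once the presentation is in hand, and the counts $3$ and $6$ emerge from that enumeration.
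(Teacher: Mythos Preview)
Your outline is correct and matches the paper's strategy: fixed points of~$\sigma$ correspond to order-$3$ elements of~$\Gamma$ lying in $\Pi j^4\cup\Pi j^8$, these are classified up to conjugacy by Lemma~\ref{lem:torsionelts}(ii), and the enumeration is carried out over the transversal $\{b^\mu k\}$ with Magma. The one substantive difference is in how the $K$-type representatives are dispatched. You observe (implicitly) that all $K$-type conjugates $b^\mu k\,t\,k^{-1}b^{-\mu}$ fix $b^\mu.0$, so there are at most three $K$-type orbits regardless of which $t\in K$ is used, and it suffices to check $t=j^4$ to see each $\Pi(b^\mu.0)$ is genuinely fixed. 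The paper instead proves a small lemma: once $b^\mu j^4 b^{-\mu}j^{-4}=\pi_\mu\in\Pi$ is known, rewriting $\pi j^4=gtg^{-1}$ with $g=\pi'b^\mu k$ yields $(b^\mu k)(tj^{-4})(b^\mu k)^{-1}\in\Pi$, so torsion-freeness forces $tj^{-4}$ to be trivial or of infinite order, eliminating every $t\in K$ except $j^4$ in one stroke. Both routes reach the same reduction to $t\in\{j^4,buv,(buv)^{-1}\}$; the paper then finds that $(buv)^{-1}$ contributes nothing, that $t=buv$ gives $18$ pairs $(\mu,k)$ collapsing to~$6$ under $(\mu,k)\sim(\mu,kj^4)$, and verifies distinctness by checking $(b^{\mu'}k')(buv)^\epsilon(b^\mu k)^{-1}\notin\Pi$ for $\epsilon=0,1,2$ --- exactly the bookkeeping you anticipate.
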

\begin{proof}If $\Pi(j^4.z)=\Pi z$, then $\pi j^4.z=z$ for some $\pi\in\Pi$. This implies that $\pi j^4$ has finite order. It cannot be trivial,
since $\Pi$ is torsion-free. If $\pi\in\Pi$, then $\pi'=(\pi j^4)^3=(\pi)(j^4\pi j^8)(j^8\pi j^4)$ is also
in~$\Pi$. Since the possible orders of the elements of~$\Gamma$ 
are the divisors of~24, if $\pi j^4$ has finite order, then $1=(\pi j^4)^{24}=(\pi')^8$,
so $\pi'$ must be~1, so that $(\pi j^4)^3$ must be~1. So $\pi j^4$ must have order~3.
So for one of the eight elements~$t$ listed in Lemma~\ref{lem:torsionelts}(ii), 
$\pi j^4=gtg^{-1}$ for some $g\in\Gamma$. Thus $gtg^{-1}j^{-4}\in\Pi$.
Since the elements $b^\mu k$, $\mu=0,1,-1$ and $k\in K$, form a set of
coset representatives for $\Pi$ in~$\Gamma$, and since $j^4$
normalizes $\Pi$, we can assume that $g=b^\mu k$ for some $\mu$ and~$k$.

When $t=j^4$, we have $b^\mu k t k^{-1}b^{-\mu}j^{-4}
=b^\mu j^4b^{-\mu}j^{-4}$, independent of~$k$. We find that these three elements are in~$\Pi$. Explicitly,
$b^\mu j^4b^{-\mu}j^{-4}=\pi_\mu$ for
\begin{equation}\label{eq:bmuOfixedpts}
\pi_0=1,\ \pi_1=\zeta^{-4}a_2a_1^{-2}a_3^{-3}a_1^{-1}\ \text{and}\ \pi_{-1}=a_2^2a_1a_3a_1^{-1}.
\end{equation}
and these equations mean that the three points $\Pi(b^\mu.0)$ are fixed by~$j^4$.

Write $\pi j^4=gtg^{-1}$ for some $g\in\Gamma$, where $t$ is  one of the eight elements~$t$ listed in Lemma~\ref{lem:torsionelts}(ii). In the
notation of~\eqref{eq:bmuOfixedpts}, and writing $g=\pi'b^\mu k$, where $\pi'\in\Pi$, $\mu\in\{0,1,-1\}$, and $k\in K$,
we get
\begin{displaymath}
\begin{aligned}
\pi j^4=\pi'b^\mu ktk^{-1}b^{-\mu}{\pi'}^{-1}
&=\pi'b^\mu ktk^{-1}(j^{-4}b^{-\mu}\pi_\mu j^4){\pi'}^{-1}\\
&=\pi'(b^\mu k)(tj^{-4})(b^\mu k)^{-1}(\pi_\mu j^4{\pi'}^{-1}j^{-4})j^4.
\end{aligned}
\end{displaymath}
So $(b^\mu k)(tj^{-4})(b^\mu k)^{-1}$ is in~$\Pi$, and therefore 
either $t=j^4$ or $tj^{-4}$ has infinite order. In particular, apart from $t=j^4$,
our $t$ cannot be in~$K$, and so must be $buv$ or~$(buv)^{-1}$.

We find that $b^\mu k t k^{-1}b^{-\mu}j^{-4}\in\Pi$ never occurs when $t=(buv)^{-1}$. 
For $t=buv$, we find that $b^\mu k t k^{-1}b^{-\mu}j^{-4}\in\Pi$ for
only 18~pairs $(\mu,k)$. This means that $j^4$
fixes $\Pi(b^\mu k.z_0)$ for these 18 $(\mu,k)$'s. If $(\mu,k)$ satisfies 
$b^\mu k t k^{-1}b^{-\mu}j^{-4}\in\Pi$, then so does $(\mu,kj^4)$, since we 
can write $b^\mu j^4=\pi_\mu j^4b^\mu$ for some $\pi_\mu\in\Pi$, as we have just seen.
Moreover, $\Pi(b^\mu kj^4.z_0)=\Pi(b^\mu k.z_0)$, since $kj^4=j^4k$ and so
\begin{displaymath}
\Pi(b^\mu kj^4.z_0)=\Pi(\pi_\mu j^4b^\mu k.z_0)=\Pi(j^4b^\mu k.z_0)=\Pi(b^\mu k.z_0).
\end{displaymath}
So we need only consider six of the $(\mu,k)$'s, 
and correspondingly setting
\begin{displaymath}
\begin{aligned}
h_1&=b^{-1}vu j^3,\\
h_4&=b^{-1}v^2u j^3,\\
\end{aligned}
\quad
\begin{aligned}
h_2&=u^{-1}v j,\\
h_5&=v j^2,\\
\end{aligned}
\quad
\begin{aligned}
h_3&=buv^2j^2,\\
h_6&=bvu^{-1}v,\\
\end{aligned}
\end{displaymath}
we have $h_i(buv)h_i^{-1}j^{-4}=\pi_i'\in\Pi$ for $i=1,\ldots,6$; explicitly,
\begin{displaymath}
\begin{aligned}
\pi_1'&=\zeta^4a_2^2a_1a_3^3,\\
\pi_4'&=\zeta^{-5}a_3^3a_1^2a_3^3,\\
\end{aligned}
\quad
\begin{aligned}
\pi_2'&=j^8a_1j^4,\\
\pi_5'&=\zeta^{-1}j^4a_1^{-1}a_2^{-1}j^8,\\
\end{aligned}
\quad
\begin{aligned}
\pi_3'&=\zeta^2j^8a_1a_2^3j^4a_2a_1a_2^{-2}a_1^{-1}.\\
\pi_6'&=\zeta a_2a_1^{-1}.\\
\end{aligned}
\end{displaymath}
The six points $\Pi(h_i.z_0)$ are distinct, as we see by checking that 
(a) the nontrivial $g\in\Gamma$ fixing~$z_0$ are just $(buv)^{\pm1}$, and
(b) $(b^{\mu'}k')(buv)^\epsilon(b^\mu k)^{-1}$ is not in~$\Pi$
for $\epsilon=0,1,2$, when $(\mu',k')$ and~$(\mu,k)$ in the above list
of six pairs are distinct.
\end{proof}

Finally, we show that $\Pi$ is a congruence subgroup of~$\Gamma$.

The prime 3 ramifies in~$\Q(\zeta)$ (as does~2), and $\F_9=\Z[\zeta]/r\Z[\zeta]$
is a field of order~9. Let $\rho:\Z[\zeta]\to\F_9$ be the natural map,
and write $i$ for~$\rho(\zeta)$. Then $i^2=-1$, and $\F_9=\F_3(i)$. Applying $\rho$ 
to matrix entries, we map $\Gamma$ to a group of matrices over~$\F_9$, modulo~$\langle i\rangle$. 
The image $\rho(g)$ of any $g\in M_{3\times3}(\Z[\zeta])$ unitary with respect 
to the~$F$ of~\eqref{eq:Fdefn} is unitary with respect to $\rho(F)$, and so if we conjugate
by $C=\rho(\gamma_0)$, where $\gamma_0$ is as in~\eqref{eq:variousmatrices}, then $\rho'(g)=C\rho(g)C^{-1}$
is unitary in the ``usual'' way. 

So $\rho'$ maps $\Gamma$ to the group $PU(3,\F_9)$ of unitary matrices with entries in~$\F_3(i)$, 
modulo scalars. This map is surjective. In fact, $\rho'(\Gamma_1)=PU(3,\F_9)$, where $\Gamma_1$
is the normal index~3 subgroup of~$\Gamma$ consisting of the $gZ\in\Gamma$ having a matrix
representative $g$ of determinant~1. One may check that $\Gamma_1=\langle v,bu^{-1},u^{-1}b\rangle$, 
and that $\langle\rho'(v),\rho'(bu^{-1}),\rho'(u^{-1}b)\rangle=PU(3,\F_9)$. 

The given generators $a_1$, $a_2$ and~$a_3$ of~$\Pi$ have determinants
$\zeta^3$, $\zeta^3$ and~$-1$, resp\-ect\-ive\-ly, and so $\Pi\subset\Gamma_1$. Now $-\zeta a_2$ and $-a_1a_2$ are mapped by $\rho'$ to the matrices
\begin{displaymath}
R=\begin{pmatrix}
 -i&-i-1&  i\\
  1& i-1& -1\\
i-1&   0&i-1
\end{pmatrix},
\quad\text{and}\quad
M=\begin{pmatrix}
   i&  -i&i+1\\
-i-1&   i& -i\\
   i&-i-1&  i
\end{pmatrix},
\end{displaymath}
respectively, which satisfy $R^7=I$, $M^3=I$ and $MRM^{-1}=R^2$. Moreover, $-a_3$ is mapped to~$R^{-1}$.
Hence $\Pi$ is mapped onto the subgroup $\langle R,M\rangle$ of $PU(3,\F_9)$, 
which has order~21. Now $|PU(3,\F_9)|=6048=288\times21$, and so the conditions on a $gZ\in\Gamma$ to
be in~$\Pi$ are that $gZ\in\Gamma_1$ and that $\rho'(g)\in\langle R,M\rangle$.
\end{section}

\begin{section}{Calculation of $r_0$.}
For any symmetric set $S\subset U(2,1)$, the following lemma simplifies the
description of the set~$\cF_S$ defined in~\eqref{eq:fsdefinition} in the case $X=B(\C^2)$.
\begin{lemma}\label{lem:fsdescription}If $g\in U(2,1)$ and $z=(z_1,z_2)\in B(\C^2)$, then
$d(0,z)\le d(0,g.z)$ if and only if 
$|g_{3,1}z_1+g_{3,2}z_2+g_{3,3}|\ge1$.
\end{lemma}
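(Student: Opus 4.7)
The plan is to reduce everything to the explicit hyperbolic distance formula~\eqref{eq:hyperbolicdistance} and then use the defining unitarity $g^*F_0g=F_0$ to relate the ``$1-|w|^2$'' denominators on both sides.

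First, specializing \eqref{eq:hyperbolicdistance} to $w=0$ gives $\cosh^2(d(0,w))=1/(1-|w|^2)$ for every $w\in B(\C^2)$, so the inequality $d(0,z)\le d(0,g.z)$ is equivalent to $1-|g.z|^2\le1-|z|^2$. The task is to express $1-|g.z|^2$ cleanly in terms of $g$ and~$z$.

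Next, I would write out the action in homogeneous form: set $\lambda=g_{3,1}z_1+g_{3,2}z_2+g_{3,3}$ and $g.z=(w_1,w_2)$, so that
\begin{displaymath}
g\begin{pmatrix}z_1\\ z_2\\ 1\end{pmatrix}=\lambda\begin{pmatrix}w_1\\ w_2\\ 1\end{pmatrix}.
\end{displaymath}
Denoting the quadratic form $v\mapsto v^*F_0v=|v_1|^2+|v_2|^2-|v_3|^2$, which is preserved by~$g$ because $g\in U(2,1)$, I apply it to both sides. On the left I get $|z_1|^2+|z_2|^2-1=|z|^2-1$, and on the right $|\lambda|^2(|w|^2-1)$. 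Hence
\begin{displaymath}
1-|g.z|^2=\frac{1-|z|^2}{|\lambda|^2}.
\end{displaymath}

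Substituting this into the distance comparison gives
\begin{displaymath}
\cosh^2(d(0,g.z))=\frac{|\lambda|^2}{1-|z|^2}=|\lambda|^2\cosh^2(d(0,z)),
\end{displaymath}
so $d(0,z)\le d(0,g.z)$ is equivalent to $|\lambda|\ge1$, which is exactly the stated condition. The only mild subtlety is that one must know $\lambda\ne0$ (which follows because $g$ maps $B(\C^2)$ to itself, so the third homogeneous coordinate of the image of $(z_1,z_2,1)^t$ cannot vanish), but this is not really an obstacle; the whole argument is essentially a one-line computation once the preservation of $v^*F_0v$ is used.
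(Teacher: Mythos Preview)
Your argument is correct and reaches the same key identity $\cosh^2(d(0,g.z))=|\lambda|^2\cosh^2(d(0,z))$ as the paper. The only cosmetic difference is packaging: the paper writes $z=h.0$ for some $h\in U(2,1)$ and then quotes~\eqref{eq:distg0to0} for $h$ and $gh$, whereas you apply the $F_0$-invariance directly to the column $(z_1,z_2,1)^t$; the computations are equivalent.
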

\begin{proof}Since $U(2,1)$ acts transitively on~$B(\C^2)$, we may write $z=h.0$ for some $h\in U(2,1)$.
So by~\eqref{eq:distg0to0}, $d(0,z)\le d(0,g.z)$ if and only if $|h_{3,3}|\le|(gh)_{3,3}|$.
Since $z_\nu=h_{\nu,3}/h_{3,3}$ for $\nu=1,2$, we have $(gh)_{3,3}=(g_{3,1}z_1+g_{3,2}z_2+g_{3,3})h_{3,3}$,
and the result follows.
\end{proof}
Now let $\Gamma$ and $S=K\cup KbK\cup Kbu^{-1}bK\subset\Gamma$ be as in Section~3. Write $r$ for~$+\sqrt{3}$.
For $1<\rho<(r+1)\sqrt{2}$, let $U_\rho$ denote the union of the 12 open discs in~$\C$ of radius~1
with centers $\rho\zeta^\lambda$, $\lambda=0,1,\ldots,11$. Let $B_\rho$ denote the
bounded component of~$\C\setminus U_\rho$. The conditions on~$\rho$ ensure that $B_\rho$ exists. See the diagram below.

Let $B_1$ and~$B_2$ denote $B_\rho$ for $\rho=(r+1)/\sqrt{2}$ and~$\rho=r+1$,
respectively. In the diagram, $\rho'$ and~$\rho''$ are the two solutions $t>0$ of
$|te^{i\pi/12}-\rho|=1$. When $\rho=(r+1)/\sqrt{2}$, we have $\rho'=1$ and $\rho''=r+1$.
When $\rho=r+1$, we have $\rho'=(r+1)/\sqrt{2}$ and
$\rho''=r(r+1)/\sqrt{2}$.

Write $\kappa$ for the square root of~$r-1$.

\begin{lemma}\label{lem:fsviacircles}Let $(w_1,w_2)\in\C^2$. Then $(w_1,w_2)\in\cF_S$ if and only if 
\begin{itemize}
\item[(i)] $u_1w_1+u_2w_2\in B_1$ for each of the pairs $(u_1,u_2)=(\sqrt{r+1},0)$, $(0,\sqrt{r+1})$
and $(\kappa^{-1}e^{-i\pi/12},\kappa^{-1}\zeta^{3\nu}e^{-i\pi/12})$ for $\nu=0,1,2,3$, and 
\item[(ii)] $u_1w_1+u_2w_2\in B_2$ for each of the pairs $(u_1,u_2)=(\kappa^{-1},\kappa^{-1}(\zeta+1)\zeta^{1+3\nu})$ 
and $(\kappa^{-1}(\zeta+1)\zeta^{1+3\nu},\kappa^{-1})$ for $\nu=0,1,2,3$,
\end{itemize}
in which case, $|w_1|,|w_2|\le1/\sqrt{r+1}$.
\end{lemma}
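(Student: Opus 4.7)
The plan is to rewrite the defining condition for $\cF_S$ as a family of disc-avoidance inequalities via Lemma~\ref{lem:fsdescription}, and to sort these according to the double-coset decomposition $S = K\cup KbK\cup Kbu^{-1}bK$. Because $S$ is symmetric, $(w_1,w_2)\in\cF_S$ is equivalent to $|\tilde g_{3,1}w_1 + \tilde g_{3,2}w_2 + \tilde g_{3,3}|\ge 1$ for each $g\in S$, where $\tilde g = D\gamma_0 g\gamma_0^{-1}D^{-1}\in U(2,1)$ is the conjugate introduced in Section~3. Elements of $K$ have $\tilde g_{3,\cdot} = (0,0,c)$ with $|c|=1$ and contribute nothing, so all work is in the other two double cosets.

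For $g = k_1 b k_2 \in KbK$, writing $\tilde k_i = \mathrm{diag}(A_i, c_i)$ with $A_i\in U(2)$ and $|c_i|=1$, the block structure shows that $k_1$ only rescales the third row of $\tilde g$ by the unimodular $c_1$ and so drops out; the inequality becomes $|\tilde b_{3,1}w_1' + \tilde b_{3,2}w_2' + c_2\tilde b_{3,3}|\ge 1$ with $(w_1',w_2')^T = A_2(w_1,w_2)^T$. A direct computation, pivoting on the identity $(r-1)(\zeta+\zeta^2)=1+\zeta^3$, gives $\tilde b_{3,1}=0$ and
\begin{displaymath}
\tilde b_{3,2} = \sqrt{r+1}\,e^{i\pi/4},\quad \tilde b_{3,3} = \sqrt{r+2}\,e^{-i\pi/12},\quad \tilde b_{3,3}/\tilde b_{3,2} = e^{-i\pi/3}/\kappa.
\end{displaymath}
Writing $u_\nu = \sqrt{r+1}(A_2)_{2,\nu}$ and $\alpha = -\sqrt{r+1}\,c_2\tilde b_{3,3}/\tilde b_{3,2}$, the condition becomes $|u_1 w_1 + u_2 w_2 - \alpha|\ge 1$ with $|u_1|^2+|u_2|^2=r+1$ and $|\alpha|=(r+1)/\sqrt 2$, so $\alpha$ lies on the circle carrying the twelve centres of the discs defining $B_1$. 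The analogous reduction for $Kbu^{-1}bK$, starting from $(\widetilde{bu^{-1}b})_{3,\cdot}$, gives $|\tilde g_{3,3}|^2 = 2r+4 = (r+1)^2$, matching the $B_2$-radius.

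The main obstacle is the enumeration of the pairs $((u_1,u_2),\alpha)$ that actually occur. Computing $\tilde u$ and $\tilde v$ explicitly from the recipe (one finds, for instance, $\tilde v = \mathrm{diag}(\zeta^3,1,1)$), I list the datum $((A_2)_{2,\cdot},c_2)$ for representatives of $K$, grouping by left multiplication by the order-4 stabilizer $\langle v\rangle = \{k\in K:\gamma_2^{-1}k\gamma_2\in K\}$ (which fixes both $(A_2)_{2,\cdot}$ and $c_2$ and hence the inequality). The claim is that exactly the six second-row directions listed in~(i) arise, and that as $k_2$ runs through the elements sharing a given direction, the constant $c_2$ takes all twelve twelfth roots of unity, rotating $\alpha$ through all twelve points $((r+1)/\sqrt 2)\zeta^\lambda$; the twelve resulting disc-avoidance inequalities per pair amalgamate into the single membership $u_1 w_1 + u_2 w_2\in B_1$, giving~(i). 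The same procedure for $Kbu^{-1}bK$, with the order-3 stabilizer $\langle u\rangle$ in place of $\langle v\rangle$ and both entries of $(\widetilde{bu^{-1}b})_{3,\cdot}$ now contributing, produces the eight pairs of~(ii). This verification is finite but unilluminating, requiring the matrix data for all 288 elements of $K$, and in practice is best done with computer algebra.

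The concluding bound $|w_1|,|w_2|\le 1/\sqrt{r+1}$ follows by applying~(i) to the pairs $(\sqrt{r+1},0)$ and $(0,\sqrt{r+1})$, which force $\sqrt{r+1}\,w_1$ and $\sqrt{r+1}\,w_2$ to lie in $B_1$, together with the elementary observation that $B_1\subseteq\overline{D(0,1)}$: when $\rho=(r+1)/\sqrt 2$ adjacent centres $\rho\zeta^\lambda,\rho\zeta^{\lambda+1}$ are at distance $2\rho\sin(\pi/12)=1$, so their unit circles meet at inner intersection points of distance $(r+2)/2 - \sqrt 3/2 = 1$ from the origin; the twelve inner boundary arcs of $B_1$ dip inward from these points and therefore stay inside the closed unit disc.
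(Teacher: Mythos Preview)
Your overall approach matches the paper's: apply Lemma~\ref{lem:fsdescription}, discard the $K$-conditions, and organize the remaining inequalities into families of twelve, one family per pair $(u_1,u_2)$. The paper makes the twelve-fold grouping explicit and computation-free by observing that right multiplication of $k$ by $j^\lambda$ (where $j=(uv)^2$, so $\tilde j=\mathrm{diag}(\zeta,\zeta,1)$) multiplies $(\tilde g_{31},\tilde g_{32})$ by~$\zeta^\lambda$ while fixing $\tilde g_{33}$; thus only $288/(4\cdot12)=6$ representatives $bk$ and $288/(3\cdot12)=8$ representatives $bu^{-1}bk$ need their third rows computed, and these are listed explicitly. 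This replaces your appeal to computer algebra over all $288$ elements, and in particular explains \emph{why} the centres are exactly the twelfth-root rotates of a single point rather than merely twelve points on the right circle. (Your phrasing that ``$c_2$ takes all twelve twelfth roots of unity'' is not literally what happens: $\tilde j$ acts trivially on the $(3,3)$-entry but rotates the first two columns, which has the same effect on the inequality after absorbing the rotation into the centre.)

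There is, however, a genuine gap in the logic. The twelve disc-avoidance inequalities attached to a fixed $(u_1,u_2)$ say only that $u_1w_1+u_2w_2$ lies in $\C\setminus U_\rho$, and this set has \emph{two} components; the inequalities do not by themselves force membership in the bounded one~$B_\rho$. Your assertion that they ``amalgamate into the single membership $u_1w_1+u_2w_2\in B_1$'' therefore needs an extra argument. The paper supplies it in two stages. First, for the pairs $(\sqrt{r+1},0)$ and $(0,\sqrt{r+1})$, a point in the unbounded component would satisfy $\sqrt{r+1}\,|w_i|\ge\rho''=r+1$, i.e.\ $|w_i|\ge\sqrt{r+1}>1$, contradicting $(w_1,w_2)\in B(\C^2)$; this already gives $|w_i|\le1/\sqrt{r+1}$. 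Second, that bound is fed back to estimate $|u_1w_1+u_2w_2|$ for the remaining four pairs in~(i) and all eight pairs in~(ii), placing the value below the relevant~$\rho''$ and hence in the bounded component in every case. Your final paragraph derives $|w_i|\le1/\sqrt{r+1}$ \emph{from}~(i), which is correct for the ``in which case'' clause of the lemma, but is circular as a step toward proving the forward implication $\cF_S\Rightarrow\text{(i)}$ itself.
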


\begin{center}
\begin{pspicture}(-4,-2)(6,2)
\psset{xunit=1,yunit=1}%
\psarc[linewidth=0.1pt,linecolor=gray](0,0){1.93185165}{17}{368}
\psarc[linewidth=0.2pt](1.93185165,0){1}{0}{360}
\psarc[linewidth=0.2pt](1.67303260,0.965925826){1}{0}{360}
\psarc[linewidth=0.2pt](0.965925826,1.67303260){1}{162.5}{317.5}
\psarc[linewidth=0.2pt](0,1.93185165){1}{192.5}{347.5}
\psarc[linewidth=0.2pt](-0.965925826,1.67303260){1}{222.5}{377.5}
\psarc[linewidth=0.2pt](-1.67303260,0.965925826){1}{252.5}{407.5}
\psarc[linewidth=0.2pt](-1.93185165,0){1}{282.5}{437.5}
\psarc[linewidth=0.2pt](-1.67303260,-0.965925826){1}{312.5}{467.5}
\psarc[linewidth=0.2pt](-0.965925826,-1.67303260){1}{342.5}{497.5}
\psarc[linewidth=0.2pt](0,-1.93185165){1}{372.5}{527.5}
\psarc[linewidth=0.2pt](0.965925826,-1.67303260){1}{402.5}{557.5}
\psarc[linewidth=0.2pt](1.67303260,-0.965925826){1}{432.5}{587.5}
\psarc[linecolor=black](1.93185165,0){1}{165}{195}
\psarc[linecolor=black](1.67303260,0.965925826){1}{195}{225}
\psarc[linecolor=black](0.965925826,1.67303260){1}{225}{255}
\psarc[linecolor=black](0,1.93185165){1}{255}{285}
\psarc[linecolor=black](-0.965925826,1.67303260){1}{285}{315}
\psarc[linecolor=black](-1.67303260,0.965925826){1}{315}{345}
\psarc[linecolor=black](-1.93185165,0){1}{345}{375}
\psarc[linecolor=black](-1.67303260,-0.965925826){1}{375}{405}
\psarc[linecolor=black](-0.965925826,-1.67303260){1}{405}{435}
\psarc[linecolor=black](0,-1.93185165){1}{435}{465}
\psarc[linecolor=black](0.965925826,-1.67303260){1}{465}{495}
\psarc[linecolor=black](1.67303260,-0.965925826){1}{495}{525}
\rput(0,0){$\scriptscriptstyle\bullet$}
\rput(1.93185165,0.0){$\scriptscriptstyle\bullet$}
\rput(0.965925826,0.258819045){$\scriptscriptstyle\bullet$}
\rput(2.6389584337,0.7071067811){$\scriptscriptstyle\bullet$}
\rput(0.93185165,0){$\scriptscriptstyle\bullet$}
\rput(1.67303260,0.965925826){$\scriptscriptstyle\bullet$}
\rput[b](0,0.1){$\scriptstyle0$}
\rput[tl](1.95,-0.05){$\scriptstyle\rho$}
\rput[l](1.05,0.35){$\scriptstyle\rho'\,e^{i\pi/12}$}
\rput[l](2.75,0.8){$\scriptstyle\rho''\,e^{i\pi/12}$}
\rput[tr](0.9,-0.025){$\scriptstyle\rho-1$}
\rput[bl](1.7,1.1){$\scriptstyle\rho e^{i\pi/6}$}
\rput(-0.4,-0.4){$B_\rho$}
\end{pspicture}
\end{center}
\begin{proof}Given $w=(w_1,w_2)\in B(\C^2)$, to verify that $w\in\cF_S$, we must show that $d(0,w)\le d(0,(bk).w)$ and 
that $d(0,w)\le d(0,(bu^{-1}bk).w)$ for all $k\in K$. Since $b$ commutes with~$v$, and $bu^{-1}b$ 
commutes with~$u$, we must check $288/4+288/3=168$ conditions. 

Let $\gamma_0$ be as in~\eqref{eq:variousmatrices}, and let $D$, as before, be the diagonal 
matrix with diagonal entries 1, 1 and~$\kappa$. The $g\in U(2,1)$ to which we apply Lemma~\ref{lem:fsdescription} 
are the matrices $(bk)\,{\tilde{}}=D\gamma_0bk\gamma_0^{-1}D^{-1}$ and $(bu^{-1}bk)\,{\tilde{}}=D\gamma_0bu^{-1}bk\gamma_0^{-1}D^{-1}$, where $k\in K$. Now
\begin{displaymath}
(bk)\,{\tilde{}}_{3i}=\kappa(\gamma_0bk\gamma_0^{-1})_{3i}\quad \text{for}\quad i=1,2,\quad\text{and}\quad
(bk)\,{\tilde{}}_{33}=(\gamma_0bk\gamma_0^{-1})_{33},
\end{displaymath}
and similarly with $b$ replaced by $bu^{-1}b$. Note also that for $\lambda\in\Z$,
\begin{displaymath}
(\gamma_0bkj^\lambda \gamma_0^{-1})_{3i}=(\gamma_0bk\gamma_0^{-1})_{3i}\zeta^\lambda\ \text{for}\ i=1,2,\ \text{and}\ (\gamma_0bkj^\lambda \gamma_0^{-1})_{33}=(\gamma_0bk\gamma_0^{-1})_{33},
\end{displaymath}
and similarly with $b$ replaced by $bu^{-1}b$. So the conditions for $w=(w_1,w_2)\in\cF_S$ to hold have the form
\begin{displaymath}
|\kappa(g_{31}w_1+g_{32}w_2)\zeta^\lambda+g_{33}|\ge1\quad\text{for}\ \lambda=0,\ldots,11,
\end{displaymath}
for 6~matrices $g$ of the form $\gamma_0bk\gamma_0^{-1}$, and 8~matrices~$g$ of the form $\gamma_0bu^{-1}bk\gamma_0^{-1}$.
If $g=\gamma_0bk\gamma_0^{-1}$, then $g_{33}=(\zeta+1)/\zeta$, and if $g=\gamma_0bu^{-1}bk\gamma_0^{-1}$, then $g_{33}=r+1$.

By taking $k=uv^2u^{-1}j$, $j^{-2}$, and $uv^{2-\nu}j^{3(\nu-1)}$, for~$\nu=0,1,2,3$, respectively, we get from $g=\gamma_0bk\gamma_0^{-1}$ the
triples $(g_{31},g_{32},g_{33})$ equal to $((\zeta+1)\zeta^{-1},0,(\zeta+1)\zeta^{-1})$,
$(0,(\zeta+1)\zeta^{-1},(\zeta+1)\zeta^{-1})$
and $((r+1)\zeta^{-1}/2,(r+1)\zeta^{-1}\zeta^{3\nu}/2,(\zeta+1)\zeta^{-1})$.
Using $\zeta+1=\frac{r+1}{\sqrt{2}}e^{i\pi/12}$ and $\kappa(r+1)/2=\kappa^{-1}$, and replacing $\lambda$ by $6-\lambda$, we see that
the conditions coming from the six $g$ of the form $\gamma_0bk\gamma_0^{-1}$ are just the conditions $u_1w_1+u_2w_2\not\in U_\rho$
for $\rho=(r+1)/\sqrt{2}$ for the six~$(u_1,u_2)$ listed in~(i). Taking the case $(u_1,u_2)=(\sqrt{r+1},0)$, 
if $u_1w_1+u_2w_2=\sqrt{r+1}\,w_1$ is in the unbounded  component of~$\C\setminus U_\rho$, then $\sqrt{r+1}\,|w_1|\ge r+1$
(since $\rho''$ equals~$r+1$ in this case), and so $|w_1|\ge\sqrt{r+1}>1$,
which is impossible for $(w_1,w_2)\in B(\C^2)$. So $\sqrt{r+1}\,w_1$ is in the bounded component~$B_\rho$,
and so $|w_1|\le1/\sqrt{r+1}$ (since $\rho'$ equals~1 in this case). Similarly,
$|w_2|\le1/\sqrt{r+1}$ for all $(w_1,w_2)\in\cF_S$.

By taking $k=v^{1-\nu}j^{3\nu-1}$, and $k=vu^{-1}v^{2+\nu}j^9$, for~$\nu=0,1,2,3$, respectively, we get from $g=\gamma_0bu^{-1}bk\gamma_0^{-1}$ the
triples $(g_{31},g_{32},g_{33})$ equal to
$((r+1)/2,(r+1)(\zeta+1)\zeta^{1+3\nu}/2,r+1)$ and $((r+1)(\zeta+1)\zeta^{1+3\nu}/2,(r+1)/2,r+1)$,
$\nu=0,1,2,3$. Replacing $\lambda$ by $6-\lambda$, we see that
the conditions coming from the eight $g$ of the form $\gamma_0bu^{-1}bk\gamma_0^{-1}$ are just the conditions $u_1w_1+u_2w_2\not\in U_\rho$
for $\rho=r+1$ for the eight~$(u_1,u_2)$ listed in~(ii). Using $|w_1|,|w_2|\le1/\sqrt{r+1}$ for $(w_1,w_2)\in\cF_S$,
we see that $u_1w_1+u_2w_2$ is in the bounded component~$B_\rho$ of~$\C\setminus U_\rho$ in each case.
\end{proof}

So calculation of~$r_0$ in this case is equivalent to calculation
of the maximum value~$\rho_0$, say, of $|w|$ on the set of $w=(w_1,w_2)\in\C^2$
satisfying the conditions~(i) and~(ii) in Lemma~\ref{lem:fsviacircles}, and $r_0=\frac{1}{2}\log\Bigl(\frac{1+\rho_0}{1-\rho_0}\Bigr)$.
As we have seen, $|w_1|,|w_2|\le1/\sqrt{r+1}$ for $(w_1,w_2)\in\cF_S$. So $\cF_S$ is compact,
and $\rho_0\le\sqrt{r-1}$.

We can now show that the value of $r_0$ is $\frac{1}{2}d(\gamma_3.0,0)=\frac{1}{2}\cosh^{-1}(r+1)$, where $\gamma_3=bu^{-1}b$. 
We first prove that this is a lower bound for~$r_0$.

\begin{lemma}\label{lem:rzeroestimate}For the above~$S$, we have 
$r_0\ge\frac{1}{2}d(\gamma_3.0,0)=\frac{1}{2}\cosh^{-1}(r+1)$. That is, $\rho_0\ge(r-1)\sqrt{r/2}$.
\end{lemma}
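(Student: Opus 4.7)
My plan is to exhibit an explicit point $w_0\in\cF_S$ with $|w_0|=(r-1)\sqrt{r/2}$, which immediately yields the claimed lower bound on~$\rho_0$. Take $w_0$ to be the hyperbolic midpoint of the geodesic from~$0$ to~$\gamma_3.0$, where $\gamma_3=bu^{-1}b$.

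First, from the table in Section~3, $|(\gamma_3)_{33}|^2=2\sqrt 3+4=(r+1)^2$, so $\cosh d(0,\gamma_3.0)=r+1$ and $d(0,w_0)=\tfrac12\cosh^{-1}(r+1)$. Hyperbolic geodesics through the origin of~$B(\C^2)$ are Euclidean radii, so $w_0$~is a positive real multiple of~$\gamma_3.0$. A half-angle calculation using $\cosh(2d(0,w_0))=r+1$ gives
\[
|w_0|^2=\tanh^2(d(0,w_0))=\frac{(r+1)-1}{(r+1)+1}=\frac{r}{r+2},
\]
and using $r^2=3$ one checks $r/(r+2)=2r-3=(r-1)^2 r/2$, so $|w_0|=(r-1)\sqrt{r/2}$, as required.

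It remains to show $w_0\in\cF_S$. By Lemma~\ref{lem:fsdescription}, the condition $d(0,w_0)\le d(g.0,w_0)$ amounts to $|\tilde g_{3,1}(w_0)_1+\tilde g_{3,2}(w_0)_2+\tilde g_{3,3}|\ge 1$, to be checked for one representative of each $K$-double coset in~$S$. It is automatic for $g\in K$ since then $g.0=0$, and it holds with equality for $g=\gamma_3$ by the midpoint property. One may reinforce this by noting that $\sigma=(bu^{-1})^2$ is an involution (by the relation $(bu^{-1})^4=1$ proved in Section~3) satisfying $\sigma.0=\gamma_3.0$, so that $w_0$ is a genuine fixed point of~$\sigma$ — this confirms that $w_0$ is the geometrically natural candidate for an extreme point of~$\cF_S$.

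For the remaining generators, coming from $KbK$ and $K\gamma_3 K\setminus\{\gamma_3\}$, I would compute $\tilde\gamma_3=D\gamma_0\gamma_3\gamma_0^{-1}D^{-1}$ to get explicit coordinates for $\gamma_3.0$, hence for the appropriate Euclidean scalar multiple $w_0$, and then verify the fourteen circle-containment conditions of Lemma~\ref{lem:fsviacircles} directly for this $w_0$. The main obstacle is precisely this bookkeeping: the stabilizer of $w_0$ in~$K$ groups the fourteen circles into $K$-orbits and so cuts down the number of genuinely distinct cases, but in the end each condition reduces to a single explicit inequality in~$r$ that must be checked case by case.
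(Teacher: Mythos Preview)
Your starting point---the midpoint $m=w_0$ of $[0,\gamma_3.0]$---is exactly the paper's, and your computation of $|w_0|=(r-1)\sqrt{r/2}$ is correct. But the claim that $d(0,w_0)\le d(g.0,w_0)$ need only be checked ``for one representative of each $K$-double coset in~$S$'' is false: since $d((kg).0,w_0)=d(g.0,k^{-1}.w_0)$ and $K$ does not fix~$w_0$, the condition genuinely depends on more than the double coset of~$g$. You implicitly correct this by invoking the fourteen conditions of Lemma~\ref{lem:fsviacircles}, yet you never verify any of them; ``I would compute \ldots\ and then verify \ldots\ case by case'' is a plan, not a proof, and the argument is incomplete as it stands.

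The paper avoids the direct computation by a short symmetry argument. If some $g\in S$ had $d(g.0,m)<d(0,m)$, the triangle inequality gives $d(g.0,0)<d(\gamma_3.0,0)$, ruling out $g\in K\gamma_3K$; and $g.0\ne0$ rules out $g\in K$, so $g\in KbK$. A symmetric estimate at the other endpoint forces $g^{-1}\gamma_3\in KbK$ as well, and one checks $KbK\cap\gamma_3KbK=\langle u\rangle bK\cup\langle u\rangle b^{-1}K$. Now $u\in K$ commutes with~$\gamma_3$ (a consequence of $bub=ubu$), hence fixes the geodesic $[0,\gamma_3.0]$ pointwise and in particular~$m$; and an order-$2$ element swapping $0$ and~$\gamma_3.0$ (so fixing~$m$) interchanges the two pieces. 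Thus everything reduces to the single case $g=b$, and a direct check shows $b.m=m$, whence $d(b.0,m)=d(0,m)$. Your fourteen verifications collapse to one---and that one even explains \emph{why} $m$ is extremal: it is a genuine fixed point of~$b$.
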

\begin{proof}Consider the geodesic $[0,\gamma_3.0]$ from~0 to~$\gamma_3.0$, and let $m$ be the 
point on~$[0,\gamma_3.0]$ equidistant between~0 and~$\gamma_3.0$. Let us show that $m\in\cF_S$.
If $m\not\in\cF_S$, there is a $g\in S$ so that $d(g.0,m)<d(0,m)$.
Now $g.0\ne0$, so that $g\not\in K$. Also,
\begin{displaymath}
d(g.0,0)\le d(g.0,m)+d(m,0)<2d(m,0)=d(\gamma_3.0,0),
\end{displaymath}
and so $g\not\in K\gamma_3K$. So $g$ must be in~$K\gamma_2K=KbK$. Since $m\in[0,\gamma_3.0]$,
\begin{displaymath}
d(g.0,\gamma_3.0)\le d(g.0,m)+d(m,\gamma_3.0)<d(0,m)+d(m,\gamma_3.0)=d(0,\gamma_3.0),
\end{displaymath}
so that $g^{-1}\gamma_3\in K\cup KbK$. Now $g^{-1}\gamma_3\not\in K$, since otherwise
$g.0=\gamma_3.0$, so that $m$ is closer to~$\gamma_3.0$ than to~0. Since $KbK$ is symmetric, we have $\gamma_3^{-1}g\in KbK$. Thus
$g$ must be in $\cG=KbK\cap\,\gamma_3KbK$.  One may verify that $\cG=(\langle u\rangle bK)\cup(\langle u\rangle b^{-1}K)$.
Since $u$ is in~$K$ and commutes with~$\gamma_3$, it fixes~$[0,\gamma_3.0]$, and so $d(g.0,m)$ is constant on both double
cosets $\langle u\rangle bK$ and $\langle u\rangle b^{-1}K$. Note that $u\gamma_3^{-1}=\gamma_3u^{-1}$ is an element in~$\Gamma$ of order~2 which
interchanges~0 and~$\gamma_3.0$, and so fixes~$m$. The map $f:g\mapsto u\gamma_3^{-1}g$ is an involution
of $\cG$, and $d(f(g).0,m)=d(g.0,m)$. Also, $f(b)=ub^{-1}u$, so that $f$ interchanges the two double
cosets, and so $d(g.0,m)$ is constant on~$\cG$. So to show the result, we need only check that $d(b.0,m)<d(0,m)$ 
does not hold. 
Now $\gamma_3.0=(z_1,z_2)$ for $z_1=-(r-1)\zeta^2/(2\sqrt{r-1})$ and $z_2=(i+1)\zeta^2/(2\sqrt{r-1})$.
Write $m_t=(tz_1,tz_2)$ for $0\le t\le1$. Then $m=m_t$ for 
$t=(1-\sqrt{1-|\gamma_3.0|^2}\,)/|\gamma_3.0|^2=(1-\sqrt{1-r/2}\,)/(r/2)=r-1$. Some routine 
calculations show that $b.m=m$, and so $d(b.0,m)=d(0,m)$. So $m\in\cF_S$, and $r_0\ge d(0,m)=\frac{1}{2}d(0,\gamma_3.0)$.
\end{proof}
\begin{proposition}\label{prop:rho0bound}If $(w_1,w_2)\in\cF_S$, then $|w_1|^2+|w_2|^2\le2r-3=\frac{r}{2}(r-1)^2$.
\end{proposition}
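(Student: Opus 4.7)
Plan: I would apply Lemma~\ref{lem:fsviacircles}(i) to $w\in\cF_S$ to extract numerical constraints on $|w_1|$, $|w_2|$ and on the four linear combinations $|w_1+\zeta^{3\nu}w_2|$, and then carry out a direct constrained optimisation.

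Since every $z\in B_1$ satisfies $|z|\le\rho'=1$, the first two pairs in Lemma~\ref{lem:fsviacircles}(i) give $|w_1|,|w_2|\le 1/\sqrt{r+1}$, while the remaining four pairs give $|w_1+\zeta^{3\nu}w_2|\le\kappa=\sqrt{r-1}$ for $\nu=0,1,2,3$. Writing $r_1=|w_1|$, $r_2=|w_2|$ and $\theta=\arg w_1-\arg w_2$, these last four constraints become
\[
r_1^2+r_2^2+2r_1r_2\cos(\theta-\pi\nu/2)\le r-1\qquad(\nu=0,1,2,3),
\]
whose worst case reads $r_1^2+r_2^2+2r_1r_2 f(\theta)\le r-1$ with $f(\theta):=\max(|\cos\theta|,|\sin\theta|)\in[1/\sqrt 2,1]$.

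To bound $|w|^2=r_1^2+r_2^2$, I would consider the ellipse $r_1^2+r_2^2+2r_1r_2f(\theta)=r-1$: on this ellipse $r_1^2+r_2^2=(r-1)-2f(\theta)r_1r_2$, so maximising $r_1^2+r_2^2$ is the same as minimising $r_1r_2$. The minimum on the full ellipse occurs at the axis endpoints $(\kappa,0)$ and $(0,\kappa)$, where $r_1^2+r_2^2=r-1$; but $\kappa>1/\sqrt{r+1}$, so these points are excluded by the individual bounds on $r_1,r_2$, and the maximum must occur where one of these bounds becomes tight. By symmetry assume $r_2=1/\sqrt{r+1}$; substituting into the ellipse equation and solving the resulting quadratic in $r_1$ yields $r_1\le(\sqrt{1+f^2}-f)/\sqrt{r+1}$, whence (using $1/(r+1)=(r-1)/2$)
\[
r_1^2+r_2^2\le(r-1)\bigl(1+f^2-f\sqrt{1+f^2}\bigr)=:(r-1)\,g(f).
\]

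A direct computation shows $g'(f)<0$ for $f>0$ (squaring $2f\sqrt{1+f^2}<1+2f^2$ reduces to $4f^2+4f^4<1+4f^2+4f^4$), so $g$ is strictly decreasing on $[1/\sqrt 2,1]$ with maximum $g(1/\sqrt 2)=3/2-\sqrt 3/2=(3-r)/2$. Hence $r_1^2+r_2^2\le(r-1)(3-r)/2=2r-3$, as required. The main obstacles to making this rigorous are (i)~ruling out the case $r_1=r_2=1/\sqrt{r+1}$, where substituting into the worst-case inequality yields $2(1+f(\theta))/(r+1)>2/(r+1)=r-1$ whenever $f(\theta)>0$, so the configuration is infeasible; and (ii)~confirming that the conditions in Lemma~\ref{lem:fsviacircles}(ii) are slack near the extremum~$m$ from the proof of Lemma~\ref{lem:rzeroestimate}, which should follow by direct evaluation at~$m$ (where they turn out to be strict).
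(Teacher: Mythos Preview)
Your approach is correct and genuinely different from the paper's. Both proofs use only the six conditions in Lemma~\ref{lem:fsviacircles}(i) (the paper phrases this as working in $\cF_{S^*}$ for $S^*=K\cup KbK$), but they extract the bound in quite different ways.

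The paper argues geometrically: at a maximiser $w$ one boundary condition is active, and after a $K$-rotation this pins $w_1$ to a specific arc of $\partial B_1$, parametrised by an angle $\theta\in[-\pi/12,\pi/12]$. The remaining four conditions on $w_2$ are then analysed as a configuration of eight circles $C_{\nu,\epsilon}$; the paper exploits a reflection symmetry of this configuration to show that $|w_2|\le|p|=|\sqrt{r-1}-w_1|$, where $p$ is an explicitly computed intersection point, and finishes with a one-variable calculation in~$\theta$. Your argument instead discards the detailed shape of $B_1$, keeping only the crude consequence $|z|\le\rho'=1$, and reduces everything to a constrained optimisation in $(r_1,r_2,f)$ that can be solved by hand. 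The paper's route gives more information about where the extremum sits inside~$\cF_S$; yours is shorter and needs no geometric insight about the circle configuration.

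Two remarks on rigor. First, the step ``so the maximum must occur where one of these bounds becomes tight'' deserves one more sentence: on the first-quadrant arc of the ellipse $r_1^2+r_2^2+2fr_1r_2=r-1$, the product $r_1r_2$ is unimodal (maximal at $r_1=r_2$, decreasing towards either axis), so subject to $r_1,r_2\le1/\sqrt{r+1}$ its minimum is at the endpoints of the admissible sub-arc, where one of the box constraints is active. With your infeasibility check for $r_1=r_2=1/\sqrt{r+1}$ this nails down the location of the maximum. Second, your ``obstacle (ii)'' is not an obstacle at all: the proposition asserts only the inequality $|w|^2\le 2r-3$, so ignoring the conditions in Lemma~\ref{lem:fsviacircles}(ii) simply enlarges the feasible set and can only weaken the bound you prove---it cannot invalidate it. (Tightness is handled separately by Lemma~\ref{lem:rzeroestimate}.)
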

\begin{proof}Let $\cF_{S^*}=\{z\in B(\C^2):d(0,z)\le d(g.0,z)\ \text{for all}\ g\in S^*\}$
for $S^*=K\cup KbK$.
Since $S^*\subset S$, we have $\cF_S\subset\cF_{S^*}$. We shall in fact show
that $|w_1|^2+|w_2|^2\le2r-3$ for $(w_1,w_2)\in\cF_{S^*}$.

Suppose that 
$w=(w_1,w_2)\in\cF_{S^*}$ and $|w_1|^2+|w_2|^2$ is maximal. Then $d(0,w)=d(0,g.w)$ for 
some $g\in KbK$. Replacing $w$ by $k.w$ for some $k\in K$, if necessary, we may suppose that
$g=buv^2u^{-1}j^7$. Let $\tilde g=D\gamma_0g\gamma_0^{-1}D^{-1}\in U(2,1)$ for this~$g$. Since $\tilde g_{31}=-\kappa(\zeta+1)\zeta^{-1}$, $\tilde g_{32}=0$
and $\tilde g_{33}=(\zeta+1)\zeta^{-1}$, Lemma~\ref{lem:fsdescription} shows that $1=|\tilde g_{31}w_1+\tilde g_{32}w_2+\tilde g_{33}|=
|\zeta+1|\,|\sqrt{r-1}\,w_1-1|$. Since $\sqrt{r+1}\,w_1\in B_1$, this means that
$\sqrt{r+1}\,w_1$ is on the rightmost arc of the boundary of~$B_1$. That is,
\begin{displaymath}
w_1=\frac{1}{\sqrt{r-1}}-\frac{1}{\sqrt{r+1}}e^{-i\theta}\quad\text{for some}\ \theta\in[-\pi/12,\pi/12].
\end{displaymath}
Fixing~$w_1$, we see from Lemma~\ref{lem:fsviacircles}(i) that $w_2$ must lie on or outside various circles, including, for $\nu=0,1,2,3$ and $\epsilon=\pm$, 
the circles~$C_{\nu,\epsilon}$ of radius~$\sqrt{r-1}$ and center
\begin{displaymath}
\alpha_{\nu,\epsilon}=-i^\nu\bigl(w_1-\sqrt{r+1}\,e^{\epsilon i\pi/12}\bigr)=i^\nu\bigl(e^{\epsilon i\pi/4}+e^{-i\theta}\bigr)/\sqrt{r+1}\,.
\end{displaymath}
In the following diagram, we have taken $\theta=\pi/24$:
\begin{center}
\begin{pspicture}(-3,-2.25)(3,2.25)
\psset{xunit=1,yunit=1}%
\rput(0,0){$\scriptscriptstyle\bullet$}
\rput[tr](-0.05,-0.05){$\scriptstyle0$}
\pscircle[linewidth=0.5pt,linecolor=red](-1.02762431057,0.506768228434){0.855599677167}
\pscircle[linewidth=0.5pt,linecolor=green](-1.02762431057,-0.348831448733){0.855599677167}
\pscircle[linewidth=0.5pt,linecolor=red](-0.506768228434,-1.02762431057){0.855599677167}
\pscircle[linewidth=0.5pt,linecolor=green](0.348831448733,-1.02762431057){0.855599677167}
\pscircle[linewidth=0.5pt,linecolor=red](1.02762431057,-0.506768228434){0.855599677167}
\pscircle[linewidth=0.5pt,linecolor=green](1.02762431057,0.348831448733){0.855599677167}
\pscircle[linewidth=0.5pt,linecolor=red](0.506768228434,1.02762431057){0.855599677167}
\pscircle[linewidth=0.5pt,linecolor=green](-0.348831448733,1.02762431057){0.855599677167}
\rput(2.1,-1.1){$\scriptstyle{C_{0,-}}$}
\rput(2.15,0.75){$\scriptstyle{C_{0,+}}$}
\rput(0.95,2){$\scriptstyle{C_{1,-}}$}
\rput(-0.75,2){$\scriptstyle{C_{1,+}}$}
\rput(-2,1.1){$\scriptstyle{C_{2,-}}$}
\rput(-2.1,-0.75){$\scriptstyle{C_{2,+}}$}
\rput(-1,-1.9){$\scriptstyle{C_{3,-}}$}
\rput(0.8,-2.05){$\scriptstyle{C_{3,+}}$}
\rput[l](0.72,0){$\scriptscriptstyle{p}$}
\rput[l](1.85,-0.1){$\scriptstyle{P}$}
\psline[linewidth=0.2pt]{->}(0.7,0)(0.28665,-0.078968)
\rput[tl](0.4,-0.7){$\scriptscriptstyle{q}$}
\rput[tl](1.1,-1.4){$\scriptstyle{Q}$}
\psline[linewidth=0.2pt]{->}(0.45,-0.675)(0.237153,-0.179344)
\end{pspicture}
\end{center}
Let $U_\epsilon(\theta)$ denote the union of the four open discs bounded by the circles $C_{\nu,\epsilon}$, $\nu=0,1,2,3$.
Using $0<\cos(\theta+\epsilon\pi/4)<1$, we see that $\C\setminus U_\epsilon(\theta)$ has two components, the 
bounded one containing~0. So the complement of $U(\theta)=U_+(\theta)\cup U_-(\theta)$ has a
bounded component containing~0. The set~$U(\theta)$ is obviously invariant under the rotations $z\mapsto i^\lambda z$, $\lambda=0,1,2,3$.
It is also invariant  under the reflection $R_{\nu,\epsilon}$ in the line through~0 
and~$\alpha_{\nu,\epsilon}$, for each~$\nu$ and~$\epsilon$. For
if $0\ne\alpha\in\C$, the reflection in the line through~0 and~$\alpha$ is the map
$R_\alpha:z\mapsto \alpha\bar z/\bar\alpha$. 
It is then easy to check that
\begin{displaymath}
R_{\nu,\epsilon}(\alpha_{\nu',\epsilon'})=\alpha_{\nu'',\epsilon'}\quad\text{for}\ \nu''=2\nu-\nu'+(\epsilon-\epsilon')/2\ \text{(mod 4).}
\end{displaymath}
Let $p$ and~$P$ be the points of intersection of $C_{0,+}$ and~$C_{0,-}$. Then
$R_{0,-}(C_{0,+})=C_{3,+}$ and $R_{0,-}(C_{0,-})=C_{0,-}$, so that $R_{0,-}(p)$ and~$R_{0,-}(P)$ are the
points $q$ and~$Q$ of intersection of~$C_{0,-}$ and~$C_{3,+}$. In particular, $|q|=|p|$ and $|Q|=|q|$.

It is now clear that $|z|\le|p|$ for all $z$ in the
bounded component of $\C\setminus U(\theta)$, and that $|z|\ge|P|$ for all $z$ in the unbounded
component. 

We now evaluate~$|p|$ and~$|P|$. Let $z\in C_{0,+}\cap C_{0,-}$. Then
\begin{displaymath}
\bigl|z+\bigl(w_1-\sqrt{r+1}\,e^{i\pi/12}\bigr)\bigr|=\sqrt{r-1}=
\bigl|z+\bigl(w_1-\sqrt{r+1}\,e^{-i\pi/12}\bigr)\bigr|.
\end{displaymath}
For $\alpha,\beta\in\C$ with $\beta\not\in\R$, $|\alpha-\beta|=|\alpha-\bar\beta|$ if and only if $\alpha\in\R$. 
So $z+w_1$ must be real, and writing $z+w_1=t\sqrt{r+1}$, with $t\in\R$, we have
\begin{displaymath}
\sqrt{r-1}=\bigl|t\sqrt{r+1}-\sqrt{r+1}\,e^{i\pi/12}\bigr|, 
\end{displaymath}
so that 
\begin{displaymath}
t^2-\frac{r+1}{\sqrt{2}}t+r-1=0.
\end{displaymath}
The solutions of this are $t=(r-1)/\sqrt{2}$ and $t=\sqrt{2}$.
Taking the smaller of these,
\begin{displaymath}
p+w_1=\frac{r-1}{\sqrt{2}}\sqrt{r+1}=\sqrt{r-1}.
\end{displaymath}
So $|p|=|\sqrt{r-1}-w_1|$. Taking instead $t=\sqrt{2}$, we see that 
$|P|=|\sqrt{2}\sqrt{r+1}-w_1|$. So $|P|\ge\sqrt{2}\sqrt{r+1}-1/\sqrt{r+1}>1/\sqrt{r+1}$,
and therefore $|z|>1/\sqrt{r+1}$ for all $z$ in the unbounded component of~$\C\setminus U(\theta)$.

So $(w_1,w_2)\in\cF_{S^*}$ implies that $w_2$ is in the bounded component of~$\C\setminus U(\theta)$, and
therefore $|w_2|\le|p|=|\sqrt{r-1}-w_1|$. Thus
\begin{displaymath}
\begin{aligned}
|w_1|^2+|w_2|^2&\le|w_1|^2+|\sqrt{r-1}-w_1|^2\\
&=r-1+2|w_1|^2-2\sqrt{r-1}\,\mathrm{Re}(w_1)\\
&=r-1+2\Bigl(\frac{1}{r-1}+\frac{1}{r+1}-\sqrt{2}\cos\theta\Bigr)\\
&\qquad-2\sqrt{r-1}\Bigl(\frac{1}{\sqrt{r-1}}-\frac{1}{\sqrt{r+1}}\cos\theta\Bigr)\\
&=3(r-1)-r(r-1)\sqrt{2}\,\cos\theta\\
&\le3(r-1)-r(r-1)\sqrt{2}\,\frac{r+1}{\sqrt{8}}\\
&=2r-3.\\
\end{aligned}
\end{displaymath}
\end{proof}

We conclude by giving a direct proof that the set~$S$ generates~$\Gamma$. The following
lemma uses a modification of an argument shown to us by Gopal Prasad.
\begin{lemma}\label{lem:mindistance}If $g\in\Gamma\setminus K$, then $d(g.0,0)\ge d(b.0,0)=\cosh^{-1}(\sqrt{r+2}\,)$.
\end{lemma}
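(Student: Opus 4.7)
The plan is to exploit the algebraic structure of $\Gamma$. Pick any matrix representative $g \in M_{3\times3}(\Z[\zeta])$ with $g^*Fg = F$, and set $N := g_{33}\overline{g_{33}}$. This~$N$ lies in $\Z[r]=\mathcal{O}_k$ (it is fixed by complex conjugation, which generates the Galois group of $\ell/k$), is independent of the representative (scaling $g$ by $\zeta^\nu I$ leaves $|g_{33}|^2$ unchanged), and satisfies $\cosh^2(d(g.0,0)) = N$ at~$v_+$ by~\eqref{eq:distg0to0}. The lemma thus reduces to the claim: if $g \notin K$, then $N \ge r+2$ as real numbers at~$v_+$.

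I would then bound $N$ at each of the two archimedean places of~$k$. At~$v_+$, the $(3,3)$-entry of the identity $\tilde g^*F_0\tilde g = F_0$ reads $|\tilde g_{33}|^2 - |\tilde g_{13}|^2 - |\tilde g_{23}|^2 = 1$, so $|\tilde g_{33}|^2 \ge 1$, with equality iff $\tilde g_{13}=\tilde g_{23}=0$, i.e., iff $g.0=0$, i.e., iff $g \in K$. Since $\tilde g_{33} = g_{33}$, this gives $N \ge 1$ at $v_+$, strictly so whenever $g \notin K$. At~$v_-$, apply the nontrivial element $\sigma$ of the Galois group of $k/\Q$ (extended in either way to a Galois automorphism of~$\ell/\Q$) to $g^*Fg=F$; since $\sigma(F)$ has eigenvalues $1$ and $\sqrt 3 \pm \sqrt 2$, all positive, $\sigma(F)$ is positive definite and $\sigma(g)$ lies in a compact unitary group — this is the assertion $\overline G(k_{v_-}) \cong PU(3)$ already used in Section~3. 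Reading off the $(3,3)$-entry of $\sigma(g)^*\sigma(F)\sigma(g) = \sigma(F)$ yields
\[
h\bigl(\sigma(g_{13}),\sigma(g_{23})\bigr) + |\sigma(g_{33})|^2 = 1,
\]
where $h$ is the positive definite Hermitian form given by the upper-left $2\times2$ block of $\sigma(F)$. Hence $|\sigma(g_{33})|^2 \le 1$, i.e., $\sigma(N) \le 1$.

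Since $\mathcal{O}_k = \Z[r]$, write $N = m + nr$ with $m,n \in \Z$. The two inequalities above read $m + n\sqrt 3 > 1$ and $0 \le m - n\sqrt 3 \le 1$. Subtracting gives $2n\sqrt 3 > 0$, so $n \ge 1$; then $m \ge n\sqrt 3 \ge \sqrt 3$ forces $m \ge 2$. Consequently $N = m + nr \ge 2 + r$, giving $|g_{33}|^2 \ge r+2$ and $d(g.0,0) \ge \cosh^{-1}\sqrt{r+2} = d(b.0,0)$, as required. Note $b$ itself achieves equality since $|b_{33}|^2 = r+2$.

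The only real subtlety is the passage to~$v_-$: the compactness of $\overline G(k_{v_-}) \cong PU(3)$ is precisely what forces $\sigma(N) \le 1$, and without this arithmetic rigidity the elementary endgame in $\Z[\sqrt 3]$ would fail (for instance, $N=2$ has $N>1$ at $v_+$ but is excluded only by $\sigma(2) = 2 > 1$). Everything else is a careful bookkeeping of the two embeddings of~$k$ and the action of~$\sigma$.
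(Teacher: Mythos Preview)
Your proof is correct and is a genuinely cleaner route than the paper's. The paper reads off the $(3,3)$-entry of $g^*Fg=F$ at~$v_+$ only, obtaining $|\alpha|^2+|\beta|^2=(r-1)(|\gamma|^2-1)$ with $\alpha=g_{13}$, $\beta=g_{13}-(r-1)g_{23}$, $\gamma=g_{33}$, and then expands each $|\cdot|^2$ explicitly as $P+rQ$ for integer quadratic forms $P,Q$ in the $\Z$-coordinates of $\Z[\zeta]$; comparing the rational and $r$-parts gives two integer equations, from which $Q(\gamma)\ge1$ and $P(\gamma)\ge2$ are extracted by positivity of~$P$ and a small case check. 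Your argument replaces this explicit decomposition by interpreting the same information via the two archimedean places: the strict inequality $N>1$ at~$v_+$ is the indefinite signature, and the bound $\sigma(N)\le1$ at~$v_-$ is exactly the compactness of $\overline G(k_{v_-})\cong PU(3)$; the endgame in $\Z[\sqrt3]$ then drops out in two lines. The two arguments are equivalent in content (the paper's pair of integer equations is precisely what one gets by separating the $1$- and $r$-coefficients, i.e.\ the $v_+$ and $v_-$ contributions), but your version makes the arithmetic mechanism transparent and avoids the explicit forms $P,Q$ and the case analysis on which of $a_0a_1,a_1a_2,a_2a_3$ is nonzero. The paper's version, on the other hand, is entirely self-contained at the level of integer computations and does not need to invoke the second embedding or the Galois group.
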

\begin{proof}
By considering the $(3,3)$-entry of $g^*Fg-F=0$, we see that
\begin{equation}\label{eq:col3cond}
|g_{13}|^2+|g_{13}-(r-1)g_{23}|^2=(r-1)\bigl(|g_{33}|^2-1\bigr).
\end{equation}
Write $\alpha=g_{13}$, $\beta=g_{13}-(r-1)g_{23}$ and $\gamma=g_{33}$. 
By hypothesis, $g.0\ne0$, and so $g_{13}\ne0$ or $g_{23}\ne0$. Hence
\begin{equation}\label{eq:alphabetagammaconds}
\alpha,\beta,\gamma\in\Z[\zeta],\quad
|\alpha|^2+|\beta|^2=(r-1)\bigl(|\gamma|^2-1\bigr),\quad\text{and}\
\alpha,\beta\ \text{are not both}\ 0.
\end{equation}
We claim that under conditions~\eqref{eq:alphabetagammaconds}, $|\gamma|^2\ge r+2$ must hold.
Writing $\alpha=a_0+a_1\zeta+a_2\zeta^2+a_3\zeta^3$, we have 
$|\alpha|^2=P(\alpha)+rQ(\alpha)$, where $P(\alpha)=a_0^2+a_0a_2+a_2^2+a_1^2+a_1a_3+a_3^2$
and $Q(\alpha)=a_0a_1+a_1a_2+a_2a_3$. Our hypothesis is that 
\begin{equation}\label{eq:PQconds}
P(\alpha)+P(\beta)+P(\gamma)=3Q(\gamma)+1\quad\text{and}\quad P(\gamma)=Q(\alpha)+Q(\beta)+Q(\gamma)+1,
\end{equation}
and we want to show that $P(\gamma)+rQ(\gamma)\ge 2+r$. Now $P$ is a positive definite form,
and $\gamma\ne0$ and either $\alpha\ne0$ or $\beta\ne0$. So the left hand side of the first
equation in~\eqref{eq:PQconds} is at least~2, so that $Q(\gamma)>0$. Since $Q(\gamma)\in\Z$,
we have $Q(\gamma)\ge1$. So all we have to do is show that $P(\gamma)\ge2$. Now $Q(\gamma)\ne0$
implies that $a_0a_1$, $a_1a_2$ or $a_2a_3\ne0$. If $a_0a_1\ne0$, then $P(\alpha)=(a_2+a_0/2)^2+3a_0^2/4
+(a_3+a_1/2)^2+3a_1^2/4\ge3(a_0^2+a_1^2)/4\ge3/2>1$, so that $P(\alpha)\ge2$. The other two cases
are similar.
\end{proof}
The following result implies that $\Gamma$ is generated by~$K$ and~$b$.
\begin{lemma}If $g\in\Gamma\setminus K$, then there exists $k\in K$ so that $d(bkg.0,0)<d(g.0,0)$.
\end{lemma}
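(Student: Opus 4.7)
The plan is to prove the lemma by translating the geometric condition into an algebraic one on the $(3,3)$-entry of the matrix product, then to exhibit a suitable $k\in K$ by a descent argument combined with the structural constraints on $b$ and $K$.

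First, by \eqref{eq:distg0to0} the inequality $d(bkg.0,0)<d(g.0,0)$ is equivalent to $|(bkg)_{33}|^2<|g_{33}|^2$. Since any $k\in K$ has the block form $\mathrm{diag}(K',1)$ (from the stabilizer lemma), direct expansion gives
\begin{displaymath}
(bkg)_{33}=c_1(k)g_{13}+c_2(k)g_{23}+b_{33}g_{33},\qquad (c_1(k),c_2(k))=(b_{31},b_{32})K'.
\end{displaymath}
The task is therefore to choose $k$ so that the complex number $\eta(k):=c_1(k)g_{13}+c_2(k)g_{23}$ lies in the open disk $D$ of radius $|g_{33}|$ centered at $-b_{33}g_{33}$ (note $|b_{33}|^2=r+2>1$, so $0\notin D$).

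Next, I would exploit two structural facts. The relation $vb=bv$ from Proposition~\ref{prop:c11emptypresentation} implies $(c_1(k),c_2(k))$ depends only on the coset $\langle v\rangle k$, so there are at most $|K|/4=72$ candidate pairs. Moreover, the upper-left blocks $K'$ of $k\in K$ are unitary for the positive definite Hermitian form $-F'$ (the upper-left $2\times 2$ block of $-F$, which is positive definite since the form~$F$ is definite on this block for $v_-$ but of mixed signature for $v_+$ — and here we are using the $-F'$-structure). Consequently, as $k$ varies, the row $(c_1(k),c_2(k))$ sweeps out an entire $K$-orbit on a fixed sphere in a companion form derived from $bF^{-1}b^*=F^{-1}$. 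Simultaneously, the identity $|g_{13}|^2+|g_{13}-(r-1)g_{23}|^2=(r-1)(|g_{33}|^2-1)$ (from the $(3,3)$-entry of $g^*Fg=F$) constrains $(g_{13},g_{23})$ to the $-F'$-sphere of radius $\sqrt{|g_{33}|^2-1}$.

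Combining these with Lemma~\ref{lem:mindistance}, which gives $|g_{33}|^2\ge r+2=|b_{33}|^2$, the disk $D$ has radius $|g_{33}|\ge\sqrt{r+2}$ and center at distance $\sqrt{r+2}\,|g_{33}|\ge r+2$ from the origin. Using $K$-equivariance I would reduce to a fundamental domain for the $K$-action on the $-F'$-sphere containing $(g_{13},g_{23})$, and then, guided by the explicit $b$, identify the $k$ that pairs $(c_1,c_2)$ optimally against $(g_{13},g_{23})$ to land $\eta(k)$ inside~$D$.

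The main obstacle is the last step: verifying that for every allowable $(g_{13},g_{23})$, the 72 candidate values of $\eta(k)$ cover enough of the annulus to meet~$D$. The hardest case is the boundary $|g_{33}|^2=r+2$, where the constraint on $(g_{13},g_{23})$ is tightest and one needs the refined structure of $KbK$ — specifically, that there exist $k'\in K$ with $bk'b\in K$ (equivalently the four relations $bv^\mu uvub=v^{\mu-1}u^{-1}v^{-1}u^{-1}$ from the proof of Proposition~\ref{prop:c11emptypresentation}) — to produce $k$ landing $bkg$ directly in $K$, so that $d(bkg.0,0)=0$. For larger values of $|g_{33}|^2$, the target disk is larger and the argument becomes strictly easier; one gains enough room from the Prasad bound to complete the disk-hitting verification by the general case analysis. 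Combined with the descent (iterate $g\mapsto bkg$; termination is guaranteed by the discreteness of $\{d(h.0,0):h\in\Gamma\}$), this shows $\Gamma=\langle K,b\rangle\subseteq\langle S\rangle$.
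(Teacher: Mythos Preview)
Your reformulation via the $(3,3)$-entry and the target disk~$D$ is correct, and the reduction to $72$ candidate pairs is fine, but the argument stalls at exactly the point you yourself flag as ``the main obstacle.'' You never carry out the disk-hitting verification; the claim that for larger $|g_{33}|^2$ ``the argument becomes strictly easier'' is not justified, since the disk~$D$, its center, and the admissible vectors $(g_{13},g_{23})$ all scale together with~$|g_{33}|$. More seriously, your treatment of the boundary case $|g_{33}|^2=r+2$ is circular: producing $k$ with $bkg\in K$ forces $g\in k^{-1}b^{-1}K\subset KbK$, and nothing you have established tells you that an arbitrary $g\in\Gamma\setminus K$ with $|g_{33}|^2=r+2$ lies in~$KbK$. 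The relations $bk'b\in K$ you invoke only help \emph{after} membership in~$KbK$ is known, which is part of what the whole lemma is meant to establish.

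The paper's proof bypasses all of this with a two-line contradiction. If no $k$ works, then $d((bk).(g.0),0)\ge d(g.0,0)$ for every $k\in K$; since $K$ fixes~$0$, this is exactly the statement that $d(s.(g.0),0)\ge d(g.0,0)$ for every $s\in S^*=K\cup KbK$, i.e.\ $g.0\in\cF_{S^*}$. Proposition~\ref{prop:rho0bound} has already shown $|w|^2\le2r-3$ on~$\cF_{S^*}$, giving $d(g.0,0)\le r_0=\tfrac12\cosh^{-1}(r+1)$, while Lemma~\ref{lem:mindistance} gives $d(g.0,0)\ge\cosh^{-1}(\sqrt{r+2})>r_0$. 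So the lemma is a corollary of the $r_0$ computation already done, not an independent case analysis over the~$72$ values of~$\eta(k)$.
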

\begin{proof}If there is no such~$k\in K$, then $d(s.(g.0),0)\ge d(g.0,0)$ for all $s\in S^*$,
and so $g.0\in\cF_{S^*}$, in the notation of the proof of Proposition~\ref{prop:rho0bound}, and
so $d(g.0,0)\le r_0$. But by Lemma~\ref{lem:mindistance},
$d(g.0,0)\ge d(b.0,0)=\cosh^{-1}(\sqrt{r+2}\,)$, which is greater than $\frac{1}{2}\cosh^{-1}(r+1)=r_0$.
\end{proof}
\end{section}

\bibliographystyle{amsplain}
\bibliography{short}

\begin{thebibliography}{1}
\bibitem{BridsonHaefliger}Martin R. Bridson, Andr\'e H\"afliger, ``Metric Spaces of Non-Positive Curvature.''
Springer-Verlag, 1999.

\bibitem{CS}Donald I. Cartwright, Tim Steger, Enumeration of the 50 fake projective
planes, C.R. Acad. Sci. Paris, I 348 (2010) 11--13.

\bibitem{Parker}John R. Parker, Complex hyperbolic lattices, 
Contemp. Math. 501 (2009), 1--42.

\bibitem{PY1}G. Prasad, S.-K. Yeung, Fake projective planes,
  Invent. math. 168 (2007) 321--370.

\bibitem{PY2}G. Prasad, S.-K. Yeung, Fake projective
  planes. Addendum. Invent. math. 182 (2010) 213--227.
\end{thebibliography}

\end{document}